\documentclass[12pt]{amsart}

\usepackage{amssymb, xcolor}%
\usepackage%[foot]
{amsaddr}
\usepackage{geometry}%
\geometry{a4paper}

\usepackage{graphicx}

%%%%%%%%%%%% ENVIRONMENTS %%%%%%%%%%%%%%%

\begingroup%
\theoremstyle{plain}%\theorembodyfont{\slshape}%
\newtheorem{prop}{Proposition}[section]
\newtheorem{corollary}[prop]{Corollary}%
\newtheorem{lemma}[prop]{Lemma}%
\newtheorem{remark}[prop]{{Remark}}%
\newtheorem{definition}[prop]{{Definition}}%
\newtheorem{defn}[prop]{{Definition}}%
\newtheorem{question}[prop]{Question}%
\endgroup%

%%%%%%%
%\begingroup%
\theoremstyle{plain}
%\theorembodyfont{\itshape}\theoremheaderfont{\bfseries}%
\newtheorem{theorem}[prop]{Theorem}%
%\endgroup%

%
%\begingroup%
\theoremstyle{break}%\theorembodyfont{\slshape}%
\newtheorem{open?}[prop]{{Open Question}}
%\endgroup%

%%

%\begingroup%
\theoremstyle{break}%\theorembodyfont{\sffamily}
%
%\endgroup%

%
%\newenvironment{proof}{{\sl Proof.}}{\hspace*{\fill}$\square$}

\def\NN{\mathbb N}

\def\ZZ{\mathbb Z}

\def\endproof{\hfill$\square$}

\def\scalar{$s$-multi\-plicative}

\def\GIv{G_{(v,I)}}
\def\GIw{G_{(w,I)}}
\def\GJv{G_{(v,J)}}
\def\Gom{G_{\omega+}}
\def\omG{G_{\omega-}}
\def\Gomp{G_{\sigma+}}
\def\ompG{G_{\sigma-}}

\keywords{scale function, tidy subgroup, homogeneous tree, tree ends}
\subjclass[2010]{22D05 (Primary) 20E08, 22D45, 22F50 (Secondary)}
\begin{document}

\title[Scale-multiplicative semigroups and geometry]%
{Scale-multiplicative semigroups and geometry:  automorphism groups of trees}
\author{U. Baumgartner} 
\address{School of Mathematics and Applied Statistics, 
          University of Wollongong,
           Wollongong NSW 2522,
           Australia,\\ 
          Tel.: +61-2 4221 3388 (school office)
           Fax: +61-2 4221 4845
}
\email{Udo\_Baumgartner@uow.edu.au}           %  \\
\author{J. Ramagge}
\address{School of Mathematics and Applied Statistics, 
          University of Wollongong,
           Wollongong NSW 2522,
           Australia,\\ 
          Tel.: +61-2 4221 3388 (school office)
           Fax: +61-2 4221 4845
}
\email{ramagge@uow.edu.au}
\author{G. A. Willis}
\address{School of Mathematical and Physical Sciences,
              The University of Newcastle, University Drive, Building V,
              Callaghan, NSW 2308, Australia, \\
              Tel.: +61-2 4921 5546, %\\
              Fax: +61-2 4921 6898, %\\
}
\email{George.Willis@newcastle.edu.au}           %  \\
\thanks{Supported by ARC Discovery Project DP0984342}

\date{\today}
\maketitle

\begin{abstract}
A scale-multiplicative semigroup in a totally disconnected, locally compact group $G$ is one for which the restriction of the scale function on $G$ is multiplicative. The maximal scale-multiplicative semigroups in 
%edit automorphism 
groups 
%edit
acting 2-transitively on the set of ends 
of trees 
%edit
without leaves 
%endedit
are determined in this paper and shown to correspond to geometric features of the tree.
\end{abstract}

\section{Introduction}
\label{sec:intro}

The scale function on a totally disconnected, locally compact group,~$G$, was introduced in~\cite{Willis94} as a tool for use in the proof of a conjecture made in~\cite{HofMukh}. The scale, $s(x)$, of an element $x$ in $G$ is a positive integer that by~\cite[Theorem~3.1]{Willis01} is equal to
\begin{equation}
\label{eq:scale}
s(x) := \min\left\{ [xVx^{-1} : xVx^{-1}\cap V] \; \colon V\leqslant G,\ V\text{ compact and open}\right\}.
\end{equation}
The scale is attained in~\eqref{eq:scale} because it is the minimum of a set of positive integers, which may be seen as follows. Compact open subgroups exist because $G$ has a base of neighborhoods of the identity comprising such subgroups (see~\cite{Dant}, \cite[Theorem~II.2.3]{MontZip} or~\cite[Theorem~II.7.7]{HewRoss}). Then, for $V$ compact and open, $xVx^{-1}\cap V$ is an open subgroup of $xVx^{-1}$, which is compact, whence $[xVx^{-1} : xVx^{-1}\cap V]$ is a positive integer. 

Since the scale function takes positive integer values, it cannot be multiplicative on~$G$, or on subgroups of $G$, unless $s(x) = 1$ for every~$x$. This rarely occurs because  $s(x) = 1 = s(x^{-1})$ only if~$x$ normalizes some compact, open subgroup of~$G$. 
There is no obstruction to the scale being multiplicative on a semigroup, however, and it indeed is multiplicative on singly generated semigroups since $s(x^n) = s(x)^n$ for every $n\in\ZZ^+$ and $x$ in $G$ by~\cite[Corollary~3]{Willis94}. A semigroup on which the scale function is multiplicative will be called \emph{scale-multiplicative} or \emph{\scalar}. 

This paper investigates the maximal \scalar\ semigroups in totally disconnected, locally compact groups. The reason for doing so is that every \scalar\ semigroup is contained in a maximal one, as a Zorn's Lemma argument shows. It is our contention that the set of all maximal \scalar\ semigroups in $G$, and the relations between them, form a structure space that encapsulates important information about $G$. %Part of the inspiration for this idea is the role played by the hull-kernel topology on the set of maximal ideals in a ring 

As a first step towards realizing the goal of producing a relational structure from maximal \scalar\ semigroups, we determine all such semigroups in the automorphism group of certain trees and relate them to the geometry of the tree. In doing so, this paper extends work of J.~Tits.  
In~\cite{arbre} Tits shows how to reconstruct a tree from the maximal compact, open subgroups in its automorphism group. 
It will be seen that each of these subgroups is also a maximal \scalar\ semigroup but others on which the scale is not always~$1$ are also identified. 
While the maximal compact, open subgroups correspond to vertices of the tree in  Tits' construction, it is seen that these other \scalar\ semigroups correspond to the  ends of the tree and to open sets in the space of ends. In addition to this correspondence with other features, what is gained by shifting attention from compact, open subgroups to \scalar\ semigroups is that a general totally disconnected, locally compact group always has maximal \scalar\ semigroups, but 
need not possess any maximal compact, open subgroups. 

The next steps towards creating a relational structure from maximal \scalar\ semigroups will be to investigate these semigroups in other totally disconnected, locally compact groups. Future work will do this for automorphism groups of buildings, semisimple Lie groups over local fields (where an algebraic rather than geometric description of the semigroups is sought) and a group, studied in~\cite{BaRaWi}, that counters intuitions formed from a focus on Lie groups and buildings. An understanding of these \scalar\ semigroups will assist in formulating the hoped-for general definition of a relational structure. 

%Question: how do criteria for two elements to belong to the same s-multiplicative semigroup translate to matrices? Also compute for Neretin groups. Different groups will produce the same structure of multiplicative semigroups, e.g. Aut(tree) and \operatorname{PSL}(2,Q_p). 

The following notational convention will be used: the set of positive integers will be denoted by $\ZZ^+$, and the set of natural numbers (including $0$) by $\NN$. 
%edit 
The identity of a group, $G$, will be denoted~$e_G$. 
%endedit 

\section{Multiplicative semigroups}

This section defines what it means for a semigroup to be multiplicative over a compact, open subgroup $V$ and to be be scale-multiplicative. 
Semigroups satisfying either of these two conditions will  be called \emph{multiplicative}. 
We show that 
semigroups multiplicative over $V$ 
are 
scale-multiplicative and open, 
from which it will follow that 
there exist 
open, maximal scale-multiplicative semigroups. 
If the scale function 
is not identically~$1$, 
there also exist  
such semigroups 
which are 
non-compact as well. 
That the group inverse produces an involution on the set of multiplicative semigroups is also shown. 

The notion of a semigroup being multiplicative over a compact, open subgroup will be defined first. 

\begin{defn}
Suppose that~$V$ is a compact, open subgroup of a totally disconnected, locally compact group~$G$. A semigroup $\mathcal{S} \subseteq G$ is \emph{multiplicative over $V$} if 
$V\subseteq \mathcal{S}$ and the map 
$s_V:\mathcal{S}\to \ZZ^+$ given by $s_V(x):= [xVx^{-1}\colon xVx^{-1}\cap V]$ 
satisfies $s_V(xy)=s_V(x)s_V(y)$ for all $x,y\in \mathcal{S}$.
\end{defn}
Since the set of semigroups multiplicative over $V$ is closed under increasing unions, each semigroup multiplicative over $V$ is contained in a maximal such semigroup. This maximal semigroup is not unique in general.

Our first result relates multiplicativity over $V$ to the notion of scale-multiplica\-tivity to be defined later on. For the statement, recall that any compact, open subgroup~$V$ at which the minimum in~\eqref{eq:scale} is attained is said to be \emph{minimizing for~$x$}.
\begin{prop}
\label{prop:scalar=>tidy}
Let the semigroup $S$ be multiplicative over $V$. Then $V$ is minimizing for every $x\in S$ and $s(x) = s_V(x)$.
\end{prop}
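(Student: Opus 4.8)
The plan is to prove the two assertions together by establishing that $s(x) = s_V(x)$ for every $x \in S$. Since $V$ is a compact, open subgroup, it is one of the subgroups competing in the minimum~\eqref{eq:scale}, so $s(x) \le s_V(x)$ holds automatically; the equality $s(x) = s_V(x)$ will then simultaneously say that this particular competitor attains the minimum, i.e.\ that $V$ is minimizing for $x$. Thus the entire task reduces to proving the reverse inequality $s_V(x) \le s(x)$.

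The first step is to exploit multiplicativity over $V$ along the cyclic semigroup generated by $x$. Because $S$ is a semigroup containing $x$, every power $x^n$ lies in $S$, and an immediate induction on $n$ using $s_V(ab) = s_V(a)s_V(b)$ gives $s_V(x^n) = s_V(x)^n$ for all $n \in \ZZ^+$. In parallel, the already-recorded identity $s(x^n) = s(x)^n$ holds for the genuine scale.

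The heart of the argument is the asymptotic formula $s(x) = \lim_{n\to\infty} s_V(x^n)^{1/n}$, valid for an arbitrary compact, open $V$. Its lower bound is immediate: $s_V(x^n) \ge s(x^n) = s(x)^n$ forces $s_V(x^n)^{1/n} \ge s(x)$. The upper bound is the substantive point. Choosing a subgroup $W$ that is minimizing for $x$ (one exists because the minimum in~\eqref{eq:scale} is attained), and using that such a $W$ remains minimizing for every power $x^n$, one has $s_W(x^n) = s(x^n) = s(x)^n$; comparing $s_V$ with $s_W$ through the common finite-index subgroup $V \cap W$ then yields a constant $C = C(x,V) \ge 1$, \emph{independent of $n$}, with $s_V(x^n) \le C\, s(x)^n$, whence $s_V(x^n)^{1/n} \le C^{1/n} s(x) \to s(x)$.

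Granting this formula, the conclusion follows in one line: substituting $s_V(x^n) = s_V(x)^n$ gives $s(x) = \lim_{n\to\infty}\bigl(s_V(x)^n\bigr)^{1/n} = s_V(x)$, which is the desired reverse inequality, hence equality, and so $V$ is minimizing for $x$ with $s(x) = s_V(x)$. The main obstacle is precisely the upper bound in the asymptotic formula: it requires the stability of minimizing subgroups under passage to powers and, above all, a \emph{uniform} comparison of the index functions $s_V$ and $s_W$ for the commensurable groups $V$ and $W$, with a constant that does not depend on $n$. This uniform commensurability estimate is exactly where the structure theory of tidy subgroups enters, and it is the step I expect to demand the most care; everything after it is a routine limit computation.
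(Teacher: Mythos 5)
Your argument is correct and takes essentially the same route as the paper: both rest on the spectral-radius formula $s(x)=\lim_n s_V(x^n)^{1/n}$ and then evaluate the limit as $s_V(x)$ using multiplicativity over $V$. The only difference is that the paper simply cites this formula from \cite[Theorem~7.7]{roggi} whereas you sketch its proof; your sketch is sound, though the uniform commensurability bound $s_V(y)\le [V:V\cap W]\,[W:V\cap W]\, s_W(y)$ is an elementary index computation, and it is rather the identity $s(x^n)=s(x)^n$ (stability of minimizing subgroups under powers) that draws on the tidy-subgroup theory.
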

\begin{proof}
Let $x\in \mathcal{S}$. Then, by the ``spectral radius formula" for the scale,  
%edit 
see~\cite[Theorem~7.7]{roggi},
$$
s(x) = \lim_n [x^nVx^{-n} : x^nVx^{-n}\cap V]^{1/n} = \lim_n s_V(x^n)^{1/n}.
$$
Since $\mathcal{S}$ is multiplicative over $V$, the last is equal to $s_V(x)$
%$$
%\lim_n s_V(x^n)^{1/n} = \lim_n s_V(x) = s_V(x) =[xVx^{-1} : xVx^{-1}\cap V] .
%$$
%Hence $s(x)=[xVx^{-1}\colon xVx^{-1}\cap V]$ and 
and it follows that $V$ is minimizing for $x$. 
\end{proof} 

If $x$ and $x^{-1}$ both belong to a semigroup multiplicative over $V$, then 
$$
s_V(x)s_V(x^{-1}) =s_V(xx^{-1})= s_V(e_G) = 1\hbox{ for every }x\in G.
$$ 
Hence $s_V(x) = s_V(x^{-1}) = 1$ and $xVx^{-1} = V$. The `only if' direction of the following corollary is thus established. The `if' direction is straightforward. 
\begin{corollary}
\label{cor:scalar=>tidy}
The subgroup $H\leqslant G$ is multiplicative over the compact, open subgroup $V$ if and only if $H$ normalizes $V$.
\endproof
\end{corollary}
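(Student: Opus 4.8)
The plan is to prove the two implications separately, observing that the `only if' direction is already essentially carried out in the paragraph preceding the statement; I would simply record how that argument specializes to a subgroup. If $H$ is multiplicative over $V$, then for any $x\in H$ both $x$ and $x^{-1}$ lie in $H$, so multiplicativity of $s_V$ gives $s_V(x)\,s_V(x^{-1}) = s_V(xx^{-1}) = s_V(e_G) = 1$. Since $s_V$ takes values in $\ZZ^+$, this forces $s_V(x) = s_V(x^{-1}) = 1$, and $s_V(x) = 1$ means exactly that $xVx^{-1}\cap V = xVx^{-1}$, i.e.\ $xVx^{-1}\subseteq V$.

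For the `if' direction I would suppose that $H$ normalizes $V$, with $V\leqslant H$, as the definition of multiplicativity over $V$ already requires $V\subseteq H$. Then $xVx^{-1} = V$ for every $x\in H$, so $s_V(x) = [V:V] = 1$ for all $x\in H$. Hence $s_V$ is identically $1$ on $H$, and the identity $s_V(xy) = 1 = 1\cdot 1 = s_V(x)s_V(y)$ holds trivially; together with the containment $V\subseteq H$ this is precisely the assertion that $H$ is multiplicative over $V$.

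The one point requiring care --- and the reason I would invoke $x^{-1}$ rather than argue from a single element --- is that $s_V(x) = 1$ yields only the containment $xVx^{-1}\subseteq V$, not equality. To upgrade to $xVx^{-1} = V$ in the `only if' direction I would apply the same conclusion to $x^{-1}\in H$, obtaining $x^{-1}Vx\subseteq V$, equivalently $V\subseteq xVx^{-1}$; the two containments together give $xVx^{-1} = V$, so $H$ normalizes $V$. I do not anticipate any genuine obstacle here: the substance is bookkeeping around the definitions, and the role of the group structure is solely to guarantee that inverses are available for this containment-to-equality step.
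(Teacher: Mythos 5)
Your proof is correct and takes essentially the same route as the paper: the \emph{only if} direction is exactly the paper's computation $s_V(x)s_V(x^{-1}) = s_V(xx^{-1}) = s_V(e_G) = 1$ applied to $x, x^{-1}\in H$, and the \emph{if} direction is the same trivial observation that $s_V\equiv 1$ when $H$ normalizes $V$. The only difference is that you spell out the two-containment argument upgrading $xVx^{-1}\subseteq V$ to $xVx^{-1}=V$, which the paper leaves implicit.
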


It is shown next  that each subgroup $V$ that is minimizing for $x$ 
gives rise to a semigroup multiplicative over $V$ and containing $x$. 
An essential part of the proof is the fact that 
a compact, open subgroup $V$ that is minimizing for~$x$ 
has a decomposition 
$V = V_+V_- = V_-V_+$ where $V_\pm$ are closed subgroups of $V$ with $xV_+x^{-1}\geqslant V_+$ and $xV_-x^{-1} \leqslant V_-$.
This follows from the fact that every minimizing subgroup 
is also \emph{tidy} for~$x$, see~\cite[Definition~p.343]{Willis94} and~\cite[Theorem~3.1]{Willis01}.

\begin{lemma}
\label{lem:scale on double cosets}
Suppose that $V$ is a compact, open subgroup of $G$ that is minimizing for $x\in G$ and let $n\in \ZZ^+$. 
Then $s(y)=s(x)^n$ for every $y\in (VxV)^n$.
\end{lemma}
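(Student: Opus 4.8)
The plan is to keep the tidy (minimizing) subgroup $V$ fixed and to reduce the whole statement to a single index-theoretic claim about the function $s_V$, namely the claim $(\ast)$ that
\[
s_V(z) = s(x)^k \quad\text{for every } z\in(VxV)^k \text{ and every } k\in\NN.
\]
Granting $(\ast)$, the lemma follows immediately. The set $\mathcal S:=\bigcup_{k\geq0}(VxV)^k$ is a semigroup containing $V=(VxV)^0$, and since $z\in(VxV)^a$, $z'\in(VxV)^b$ force $zz'\in(VxV)^{a+b}$, the claim $(\ast)$ says precisely that $s_V$ is multiplicative on $\mathcal S$, i.e. that $\mathcal S$ is multiplicative over $V$. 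Then \proref{prop:scalar=>tidy} yields $s(y)=s_V(y)=s(x)^n$ for every $y\in(VxV)^n$, which is the assertion (with the bonus that $V$ is minimizing for $y$). Alternatively, and without invoking the Proposition, one may apply $(\ast)$ to the powers $y^m\in(VxV)^{nm}$ and use the spectral radius formula to get $s(y)=\lim_m s_V(y^m)^{1/m}=\lim_m\bigl(s(x)^{nm}\bigr)^{1/m}=s(x)^n$.

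Thus everything rests on $(\ast)$, which I would prove by establishing its two inequalities separately. The upper bound is the easy half. I would use that the index function $g\mapsto s_V(g)=[gVg^{-1}:gVg^{-1}\cap V]$ is submultiplicative, $s_V(gh)\leq s_V(g)s_V(h)$ (a standard fact, and exactly the subadditivity of $\log s_V$ that makes the limit in the spectral radius formula exist). Since $s_V\equiv 1$ on the subgroup $V$ and $s_V(x)=s(x)$ because $V$ is minimizing for $x$, writing $z=v_0xv_1\cdots xv_k$ with $v_i\in V$ gives $s_V(z)\leq\prod_i s_V(v_i)\cdot s_V(x)^k=s(x)^k$; in particular $s(z)\leq s_V(z)\leq s(x)^k$.

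The reverse inequality $s_V(z)\geq s(x)^k$ is the heart of the matter, and I expect it to be the main obstacle. Here I would exploit the tidiness decomposition $V=V_+V_-=V_-V_+$ with $xV_+x^{-1}\geq V_+$, $xV_-x^{-1}\leq V_-$ and $s(x)=[xV_+x^{-1}:V_+]$. The difficulty is that $y$ does not expand $V_+$ directly: in $z=v_0xv_1\cdots xv_k$ the interspersed elements $v_i\in V$ conjugate $V_+$ nontrivially, so one cannot simply iterate the identity $[xV_+x^{-1}:V_+]=s(x)$. The point to prove is that, because each $v_i$ normalises $V$ and because $V=V_+V_-$ is tidy, the expansion in the positive direction accumulates by the full factor $s(x)$ at each occurrence of $x$ and never collapses. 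I would make this precise by an induction on $k$: peeling off the last factor gives $zVz^{-1}=z''(xVx^{-1})z''^{-1}$ with $z''\in(VxV)^{k-1}$, and I would compare the three compact open subgroups $C:=zVz^{-1}$, $B:=z''Vz''^{-1}$ and $V$ through the tower law for indices, using the conjugation-invariant equality $[C:B\cap C]=[xVx^{-1}:xVx^{-1}\cap V]=s(x)$ at each step and the stabilisation of the intersections $V\cap x^jVx^{-j}$ furnished by tidiness to control the remaining cross terms. A useful simplification is available from the modular function $\Delta$: since each $v_i$ lies in the compact group $V$ one has $\Delta(y)=\Delta(x)^n$, and the general identity relating $s(g)$, $s(g^{-1})$ and $\Delta(g)$ then shows that the reverse inequality for $y$ is equivalent to the one for $y^{-1}\in(Vx^{-1}V)^n$, so it suffices to prove a single lower bound. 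Controlling the effect of the $V$-factors $v_i$ on the expanding chain $V_+\leq xV_+x^{-1}\leq\cdots$ is the crux; geometrically, for trees, this is the statement that $V$ fixes a long enough portion of the axis of $x$ that every letter $x$ contributes its full translation, forbidding the elliptic collapse that a non-tidy $V$ would permit.
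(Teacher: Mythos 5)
There is a genuine gap. Your reduction to the claim $(\ast)$ is sound, and your upper bound via submultiplicativity of $s_V$ is fine, but the lower bound $s_V(z)\geq s(x)^k$ --- which you correctly identify as the heart of the matter --- is never actually established. The inductive scheme you sketch does not close as stated: knowing $[C:C\cap B]=s(x)$ and $[B:B\cap V]=s(x)^{k-1}$ for the three compact open subgroups $C=zVz^{-1}$, $B=z''Vz''^{-1}$ and $V$ gives no control on $[C:C\cap V]$ by index arithmetic alone, since $C\cap V$ need not sit inside $B$ and the ``cross terms'' you defer to tidiness are precisely where the argument must do its work. A plan that ends with ``use tidiness to control the remaining cross terms'' leaves the lemma unproved.

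The missing idea, which is how the paper proceeds, is to prove the explicit set identity $(VxV)^n=V_-x^nV_+$. Writing $(VxV)^n=V(xVx^{-1})(x^2Vx^{-2})\cdots(x^{n-1}Vx^{1-n})x^nV$ and repeatedly substituting $V=V_-V_+=V_+V_-$ together with $xV_+x^{-1}\geq V_+$ and $xV_-x^{-1}\leq V_-$ collapses the whole product to $V_-x^nV_+$. This gives every $y\in(VxV)^n$ the normal form $y=t_-x^nt_+$ with $t_\pm\in V_\pm\subseteq V$; since $t_+Vt_+^{-1}=V$ and conjugation by $t_-^{-1}\in V$ does not change the index $[\,\cdot\,:\,\cdot\,\cap V]$, one gets $[yVy^{-1}:yVy^{-1}\cap V]=[x^nVx^{-n}:x^nVx^{-n}\cap V]=s(x)^n$ directly, with no induction and no cross terms to estimate. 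The same identity also exhibits $V$ as tidy for $y$, which is what licenses replacing $s_V(y)$ by $s(y)$ without appealing to the spectral radius formula. I recommend you replace your inductive sketch with this normal-form computation; the two conjugation steps ($t_+$ normalises $V$, and $t_-\in V$ leaves the index unchanged) are the entire proof once the identity is in hand.
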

\begin{proof}
For $y\in (VxV)^n$ we have
$$
y\in \underbrace{(VxV)\dots (VxV)}_n = V(xVx^{-1})\dots (x^{n-1}Vx^{1-n})x^nV.
$$
The comment preceding the statement of the lemma shows  that
$$
V(xVx^{-1}) = V_-V_+(xV_+x^{-1})(xV_-x^{-1}) = V_-(xV_+x^{-1})(xV_-x^{-1}),
$$
where the last is equal to $V_-(xV_-x^{-1}) (xV_+x^{-1})$, and which is in turn equal to $V_-(xV_+x^{-1})$ because $xV_-x^{-1}\leqslant V_-$. Iterating this argument yields 
$$
V(xVx^{-1})\dots (x^{n-1}Vx^{1-n})x^nV = V_-(x^{n-1}V_+x^{1-n})x^n V.
$$
Next, since $(x^{n-1}V_+x^{1-n})x^n =  x^n(x^{-1}V_+x)\leqslant x^n V_+$, it follows that
$$
V(xVx^{-1})\dots (x^{n-1}Vx^{1-n})x^nV = V_-x^n V.
$$
Finally, using again that $V = V_-V_+$ and that $x^nV_-x^{-n}\leqslant V_-$, we obtain
\begin{equation}
\label{eq:VxV}
V(xVx^{-1})\dots (x^{n-1}Vx^{1-n})x^nV = V_-x^n V_+.
\end{equation}
Hence $V$ is tidy and therefore minimizing for $y$, by~\cite[Theorem~3]{Willis94}, and to prove the lemma it suffices to show that $[yVy^{-1} : yVy^{-1}\cap V] = s(x)^n$.

For this, it follows from \eqref{eq:VxV} that $y = t_-x^nt_+$, where $t_-\in V_-$ and $t_+\in V_+$. Then
$$
[yVy^{-1} : yVy^{-1}\cap V] = [t_-x^nV(t_-x^n)^{-1} : t_-x^nV(t_-x^n)^{-1}\cap V]
$$
because $t_+Vt_+^{-1} = V$. Conjugating by $t_-^{-1}$ shows that the index on the right is equal to 
$$
[x^nVx^{-n} : x^nVx^{-n}\cap t_-^{-1}Vt_-], \text{ which equals } 
[x^nVx^{-n} : x^nVx^{-n}\cap V]
$$ 
because $t_-\in V$. The latter is equal to $s(x^n) = s(x)^n$ because $V$ is minimizing for~$x$, whence 
$[yVy^{-1} : yVy^{-1}\cap V] = s(x)^n$
as required.
\end{proof}

The previous lemma and continuity of the scale function \cite[Corollary~4]{Willis94} imply: 

\begin{prop}
\label{prop:tidy<->scalar}
Let $x\in G$ and suppose that $V$ is a compact, open subgroup of $G$ that is minimizing for $x$. Then the semigroup generated by $x$ and $V$ is multiplicative over $V$. If $s(x)\neq 1$, then this semigroup  is not compact.
\end{prop}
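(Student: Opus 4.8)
The plan is to give an explicit description of the semigroup $S$ generated by $x$ and $V$, read off the values of $s_V$ on $S$ from \lemref{lem:scale on double cosets}, and then obtain non-compactness from the continuity of the scale. Since $S$ is the semigroup, and not the group, generated by $x$ and $V$, no negative powers of $x$ occur, and because $V$ is a subgroup we have $VV=V$. Hence every element of $S$ is a product of the form $v_0xv_1x\cdots xv_n$ with $v_i\in V$ and $n\geqslant 0$, and collecting these products gives
$$
S = V\cup\bigcup_{n\geqslant 1}(VxV)^n,\qquad (VxV)^n = V(xV)^n .
$$
In particular $V\subseteq S$, which is the containment required by the definition of multiplicativity over $V$.

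Next I would compute $s_V$ on $S$. For $y\in(VxV)^n$, \lemref{lem:scale on double cosets} gives $s(y)=s(x)^n$; moreover the proof of that lemma shows that $V$ is tidy, and hence minimizing, for $y$, so that $s_V(y)=[yVy^{-1}\colon yVy^{-1}\cap V]=s(y)=s(x)^n$, with the convention $s(x)^0=1$ covering the case $y\in V$. With this in hand, multiplicativity is immediate: given $y\in(VxV)^m$ and $z\in(VxV)^p$ we have $yz\in(VxV)^{m+p}$, so
$$
s_V(yz)=s(x)^{m+p}=s(x)^m\,s(x)^p=s_V(y)\,s_V(z).
$$
Since $s_V$ is defined intrinsically from $V$ and $y$, no consistency problem arises if an element happens to lie in several of the sets $(VxV)^n$. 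The delicate point of the argument is the computation of $s_V$ in the previous step: what must be checked is not merely that the ambient scale $s$ is multiplicative along $S$, but that the \emph{index} $s_V$ coincides with $s$ on all of $S$, and this rests on $V$ remaining minimizing for every element of $S$, which is exactly what the tidiness part of the proof of \lemref{lem:scale on double cosets} supplies.

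Finally I would treat non-compactness. Assume $s(x)\neq 1$, so that $s(x)\geqslant 2$. As $x\in S$ and $S$ is a semigroup, $x^n\in S$ for all $n\in\ZZ^+$, and $s(x^n)=s(x)^n\to\infty$ by \cite[Corollary~3]{Willis94}. If $S$ were compact, then, the scale function being continuous by \cite[Corollary~4]{Willis94}, its restriction to $S$ would be bounded, contradicting the unboundedness of $\{s(x^n)\colon n\in\ZZ^+\}$. Therefore $S$ is not compact, which completes the proof.
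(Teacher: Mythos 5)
Your proof is correct and follows essentially the same route as the paper: the same decomposition $S=V\cup\bigcup_{n\geqslant 1}(VxV)^n$, the same appeal to Lemma~\ref{lem:scale on double cosets} for the values on each $(VxV)^n$, and the same continuity argument for non-compactness. Your only deviations are cosmetic but welcome ones: you absorb the paper's separate ``one factor lies in $V$'' case into the convention $n=0$, and you make explicit the point the paper leaves implicit, namely that the tidiness established in the proof of Lemma~\ref{lem:scale on double cosets} is what guarantees $s_V=s$ on all of $S$, so that multiplicativity of $s$ really does give multiplicativity over $V$.
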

\begin{proof}
To establish 
the first claim, 
note that 
the semigroup 
generated by~$x$ and~$V$ 
is $\mathcal{S}=V\cup\bigcup_{n\in\ZZ^+} (VxV)^n$.  
Given $y$ and~$z$ 
in~$\mathcal{S}$, 
consider two cases. 

In the first case, 
assume that 
$y\in (VxV)^m$ and $z\in (VxV)^n$. 
Then  
$yz$ belongs to $(VxV)^{m+n}$ 
and 
the claim follows 
because $s(y) =s(x)^m$,  $s(z)=s(x)^n$ and $s(yz)=s(x)^{m+n}$ by Lemma~\ref{lem:scale on double cosets}. 

In the second case, 
at least one of
the elements~$y$ and~$z$ belongs to~$V$ and it may be  
supposed without loss 
that~$y\in V$.  
Then $s(yz)=s(z)$ by~\cite[Theorem~3]{Willis94} and,
since $s(V)=\{1\}$, 
it follows that 
$s(yz)=s(y)s(z)$. 

For the second claim note that, if $s(x)\ne1$, then $\{x^n \colon n\in{\mathbb N}\}$ is not contained in any compact subset of $G$ by continuity of the scale function.
\end{proof}

The previous discussion shows that there are open subsemigroups of $G$ satisfying the next definition. 
\begin{definition}
\label{defn:generalmult}
Let $G$ be a totally disconnected locally compact group. A semigroup, $\mathcal{S}\subseteq G$ is \emph{scale-multiplicative}, or \emph{\scalar}, if 
$$
s(xy) = s(x)s(y)\text{ for every }x,\,y\in \mathcal{S}.
$$
\end{definition}

As remarked in the introduction, every \scalar\ semigroup is contained in a maximal one. Since the scale function is continuous the following is immediate.
\begin{prop}
\label{prop:maximal} Every \scalar\  subsemigroup of $G$ is contained in a maximal such semigroup. All maximal \scalar\ semigroups are closed. 
\endproof
\end{prop}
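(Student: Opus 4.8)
The plan is to treat the two assertions separately, handling the first with a Zorn's Lemma argument and the second with the continuity of the scale. For the existence of maximal \scalar\ semigroups, I would apply Zorn's Lemma to the set of \scalar\ subsemigroups of $G$ containing a given one, ordered by inclusion. The only point requiring verification is that every chain has an upper bound, and the natural candidate is the union of the chain. First I would check that this union is a semigroup: any two of its elements lie in a common member of the chain, since the chain is totally ordered, and that member is closed under multiplication. The same observation shows the union is \scalar, for if $x,y$ lie in the union then both lie in a single member $\mathcal{T}$ of the chain, so $s(xy)=s(x)s(y)$ holds in $\mathcal{T}$ and hence in the union. Zorn's Lemma then produces a maximal element above the given semigroup.

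For the second assertion I would show that the closure $\overline{\mathcal{S}}$ of any \scalar\ semigroup $\mathcal{S}$ is again \scalar; maximality then forces $\mathcal{S}=\overline{\mathcal{S}}$, so every maximal \scalar\ semigroup is closed. Since multiplication in $G$ is continuous, $\overline{\mathcal{S}}$ is a semigroup. The key point is that the scale takes values in $\ZZ^+$ with the discrete topology, so its continuity (see~\cite[Corollary~4]{Willis94}) means it is locally constant. Thus for $x,y\in\overline{\mathcal{S}}$ there are neighbourhoods of $x$, of $y$, and of $xy$ on which $s$ is constant, taking the values $s(x)$, $s(y)$, and $s(xy)$ respectively.

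I would then approximate $x$ and $y$ by elements $x',y'\in\mathcal{S}$ lying in the first two neighbourhoods and chosen close enough that $x'y'$ falls in the third, which is possible by continuity of multiplication. This yields $s(x')=s(x)$, $s(y')=s(y)$ and $s(x'y')=s(xy)$. Since $x',y'\in\mathcal{S}$ and $\mathcal{S}$ is \scalar, we have $s(x'y')=s(x')s(y')$, and combining these equalities gives $s(xy)=s(x)s(y)$, as required. I do not anticipate a serious obstacle: the only care needed is to select the approximating elements simultaneously so that their product lands in the prescribed neighbourhood, and the joint continuity of multiplication guarantees this can be arranged.
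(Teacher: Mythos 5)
Your proposal is correct and follows exactly the route the paper intends: the paper leaves the Zorn's Lemma argument to a remark in the introduction and declares the closedness ``immediate'' from continuity of the scale, and your write-up simply supplies those details (chain unions for Zorn, and local constancy of the integer-valued scale plus joint continuity of multiplication to show $\overline{\mathcal{S}}$ is \scalar). No gaps.
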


In view of Proposition~\ref{prop:tidy<->scalar} and Theorem~\ref{thm:class(scalar-semigroups)} below, maximal open \scalar\ semigroups are of particular interest. Should $\mathcal{S}$ contain an open subgroup~$V$, then $\mathcal{S} = \mathcal{S}V$ is open. Any semigroup containing a semigroup multiplicative over $V$ is therefore  open. In particular, any maximal \scalar\ semigroup containing a semigroup as in Proposition~\ref{prop:tidy<->scalar} is open. 
So far as we know, this is the only circumstance in which open \scalar\ semigroups occur.
\begin{question}
\label{qu:open}
Let $\mathcal{S}$ be a maximal \scalar\ semigroup that contains an open semigroup. Does $\mathcal{S}$ contain an open subgroup of $G$? 
\end{question}
Note that, even when an \scalar\ semigroup $\mathcal{S}$ contains an open subgroup, that subgroup need not be minimizing for all elements of $\mathcal{S}$. In particular, the semigroups in Theorem~\ref{thm:class(scalar-semigroups)} contain a largest open subgroup but that subgroup is not minimizing for any element that does not belong to it.

The next result implies that there is a natural involution on the set of maximal \scalar\ semigroups. 
\begin{prop}
\label{prop:involution}
The semigroup $\mathcal{S}$ is \scalar\ if and only if $\mathcal{S}^{-1}$ is \scalar.
Moreover, $\mathcal{S}$ is maximal \scalar\ if and only if $\mathcal{S}^{-1}$ is maximal \scalar.
\end{prop}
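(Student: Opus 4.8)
The plan is to prove both equivalences by relating the scale of a product to the scale of the inverse of that product, using the fundamental identity $s(x^{-1})$ and the behaviour of the scale under inversion. The key elementary fact is that for any $x \in G$ one has a clean relationship between $s(xy)$ and $s((xy)^{-1}) = s(y^{-1}x^{-1})$; indeed, running the definition~\eqref{eq:scale} through the substitution $x \mapsto x^{-1}$ shows only that $s(x^{-1})$ is generally unrelated to $s(x)$, so the argument cannot proceed by a naive termwise inversion. Instead I would work directly with the multiplicativity condition on $\mathcal{S}^{-1}$ and reduce it to the corresponding condition on $\mathcal{S}$.

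First I would establish the forward implication of the first claim. Suppose $\mathcal{S}$ is \scalar, and take arbitrary elements $a, b \in \mathcal{S}^{-1}$, so that $a = x^{-1}$ and $b = y^{-1}$ for some $x, y \in \mathcal{S}$. Then $ab = x^{-1}y^{-1} = (yx)^{-1}$, and since $\mathcal{S}$ is a semigroup we have $yx \in \mathcal{S}$. The goal is to show $s(ab) = s(a)s(b)$, i.e.\ $s((yx)^{-1}) = s(x^{-1})s(y^{-1})$. The natural route is to verify that $\mathcal{S}^{-1}$ is again scale-multiplicative by checking the defining identity of Definition~\ref{defn:generalmult} directly on inverses; the symmetry of the problem means the reverse implication follows by applying the forward implication to $\mathcal{S}^{-1}$ and using $(\mathcal{S}^{-1})^{-1} = \mathcal{S}$.

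The main technical ingredient, and what I expect to be the crux, is a scale identity for inverses of products. I would invoke or reprove the fact that the scale interacts with inversion in a controlled way: writing the scale via a minimizing (tidy) subgroup $V$ as in Proposition~\ref{prop:scalar=>tidy}, the decomposition $V = V_+V_-$ with $xV_+x^{-1} \geqslant V_+$ and $xV_-x^{-1} \leqslant V_-$ shows that $V$ is simultaneously tidy for $x$ and for $x^{-1}$, with the roles of $V_+$ and $V_-$ interchanged; consequently $s(x^{-1})$ is computed from the same decomposition. The product $yx$ and its inverse $x^{-1}y^{-1}$ can then be handled by the double-coset analysis of Lemma~\ref{lem:scale on double cosets}, applied to a subgroup minimizing simultaneously for the relevant elements. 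The delicate point is that a single $V$ need not minimize for all of $x$, $y$, and their products at once; I would therefore phrase the equivalence purely through the scale identity $s(xy) = s(x)s(y) \Leftrightarrow s((xy)^{-1}) = s(x^{-1})s(y^{-1})$, proving this bi-implication element-by-element rather than seeking a global minimizing subgroup.

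Once the first claim is in hand, the second claim about maximality is formal. The map $\mathcal{S} \mapsto \mathcal{S}^{-1}$ is an inclusion-preserving involution on the collection of subsets of $G$: if $\mathcal{S}_1 \subseteq \mathcal{S}_2$ then $\mathcal{S}_1^{-1} \subseteq \mathcal{S}_2^{-1}$, and $(\mathcal{S}^{-1})^{-1} = \mathcal{S}$. By the first claim this involution restricts to a bijection of the set of \scalar\ semigroups onto itself, and an order-preserving bijection carries maximal elements to maximal elements. Hence $\mathcal{S}$ is maximal \scalar\ precisely when $\mathcal{S}^{-1}$ is, completing the proof. The only substantive work is the scale-inversion identity; everything downstream is bookkeeping.
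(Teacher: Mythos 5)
Your reduction of the problem is the right one, and your treatment of the maximality statement (the inverse map is an inclusion-preserving involution, hence carries maximal \scalar\ semigroups to maximal ones) matches the paper exactly. The gap is that the entire content of the proposition lives in the ``scale-inversion identity'' you defer to, and you never actually establish it. You propose to prove $s(xy)=s(x)s(y) \Leftrightarrow s((xy)^{-1})=s(x^{-1})s(y^{-1})$ via tidy subgroups and the double-coset analysis of Lemma~\ref{lem:scale on double cosets}, but you yourself note that no single subgroup need be minimizing for $x$, $y$ and $xy$ simultaneously, and ``proving this bi-implication element-by-element'' is not an argument --- it is a restatement of what must be shown. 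Worse, your opening remark that $s(x^{-1})$ is ``generally unrelated to $s(x)$'' points away from the one fact that makes the proof work: by \cite[Corollary~1]{Willis94}, $s(g)=\Delta(g)\,s(g^{-1})$ for every $g\in G$, where $\Delta$ is the modular function. Since $\Delta$ is a homomorphism on all of $G$ (no tidiness or common minimizing subgroup needed), one gets immediately, for $x,y\in\mathcal{S}$ with $\mathcal{S}^{-1}$ \scalar,
\begin{equation*}
s(xy)=\Delta(xy)\,s\bigl((xy)^{-1}\bigr)=\Delta(x)\Delta(y)\,s(y^{-1})s(x^{-1})=\bigl(\Delta(x)s(x^{-1})\bigr)\bigl(\Delta(y)s(y^{-1})\bigr)=s(x)s(y),
\end{equation*}
and the converse follows by symmetry since $(\mathcal{S}^{-1})^{-1}=\mathcal{S}$. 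This is exactly your bi-implication, but it is obtained from the global multiplicativity of $\Delta$, not from any per-element tidy decomposition.

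If you want to avoid quoting \cite[Corollary~1]{Willis94} as a black box, the identity $s(g)=\Delta(g)s(g^{-1})$ can be derived from the tidy-subgroup picture you sketch: for $V$ minimizing for $g$ (hence for $g^{-1}$), one has $s(g)=[gVg^{-1}:gVg^{-1}\cap V]=\mu(gVg^{-1})/\mu(gVg^{-1}\cap V)$ and $s(g^{-1})=[V:V\cap gVg^{-1}]=\mu(V)/\mu(gVg^{-1}\cap V)$, whose ratio is $\mu(gVg^{-1})/\mu(V)=\Delta(g)$. But that computation, or the citation replacing it, is the crux of the proof and must appear; as written, your argument stalls precisely where the paper's begins.
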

\begin{proof}
Suppose that $\mathcal{S}^{-1}$ is \scalar\ and consider $x,y\in \mathcal{S}$. Denoting the modular function on $G$ by $\Delta : G \to (\mathbb{R}^+,\times)$,~\cite[Corollary~1]{Willis94} implies that
$$
s(xy) = \Delta(xy)s((xy)^{-1}) = \Delta(x)\Delta(y) s(y^{-1})s(x^{-1}),
$$
because $\Delta$ is a homomorphism and $\mathcal{S}^{-1}$ is \scalar. Since $s(x) = \Delta(x)s(x^{-1})$ and similarly for $s(y)$, it follows that $s(xy) = s(x)s(y)$ and that $\mathcal{S}$ is a \scalar\ semigroup.

For the maximality statement, 
%edit 
note that the inverse map is an involution preserving proper containment. 
%note that if $\mathcal{T}= \mathcal{S}\cup\{g\}$ is \scalar\ then so is $\mathcal{T}^{-1}$ by the above argument. Hence if $\mathcal{S}$ is not maximal, then neither is $\mathcal{S}^{-1}$ and the result follows.
%endedit 
\end{proof}

For any \scalar\  semigroup $\mathcal{S}$, the map $s : \mathcal{S}\to (\ZZ^+,\times)$ is a homomorphism whose range is a subsemigroup of $(\ZZ^+,\times)$. This semigroup is not equal to all of $\ZZ^+$ if $G$ is compactly generated, because the scale has only finitely many prime divisors in that case, see~\cite{Wi:finitefactors}.  

%What is the structure of this semigroup? Must it be finitely generated when $G$ is compactly generated? 

\section{Groups supporting positive scale}
\label{sec:+scale}

Let $H\rtimes_\alpha \ZZ$ be a totally disconnected, locally compact group, where $\alpha$ is an automorphism of $H$. 
By identifying the internal and external semi-direct products, we write  $s(\alpha)=s(e_H,1)$.
Suppose that there is a compact, open subgroup $V\leqslant H$ with 
\begin{equation}
\label{eq:semidirect}
\alpha(V)>V\ \text{ and }\ \bigcup_{n\in\ZZ} \alpha^n(V) = H.
\end{equation}
Then $s(\alpha) = [\alpha(V) : V]$ and  
$s(\alpha^{-1}) =[\alpha^{-1}(V) : \alpha^{-1}(V)\cap V] %= [\alpha^{-1}(V) : \alpha^{-1}(V)] 
= 1$. Thus $V$ is minimizing for $\alpha^{-1}$ and hence, by~\cite[Corollary~3.11]{Willis01} or~\cite[Corollary~5.3]{roggi}, for~$\alpha$. 

Such semidirect products are of particular interest 
because every group $G$ that is not uniscalar has subgroups of this form. 
If $s(x)>1$ and $V$ is tidy for $x$, 
then $V_{++} := \bigcup_{n\in\ZZ} x^nV_+x^{-n}$ 
is a closed subgroup of $G$ that is normalized by $x$, 
and $\langle x, V_{++}\rangle=V_{++}\rtimes \langle x \rangle$, 
see~\cite[Proposition~2]{Willis94}. 
Denote the inner automorphism $y\mapsto xyx^{-1}$ of $G$ by $\alpha_x$,
so that $V_{++}\rtimes \langle x \rangle=V_{++}\rtimes_{\alpha_x} \langle x \rangle$. 
Multiplicative semigroups contained 
in $V_{++}\rtimes_{\alpha_x} \langle x \rangle$ 
%in $V_{++}\rtimes_{\alpha_x}\ZZ$ 
will extend to maximal \scalar\ semigroups in $G$. 
By way of illustration of Proposition~\ref{prop:tidy<->scalar}, 
observe that the semigroup in~$H$ generated 
%in $V_{++}\rtimes_{\alpha_x}\ZZ$ 
by $V$ and $x$, respectively $x^{-1}$, is the disjoint union 
%$$
%\bigsqcup_{n\in\ZZ^+} \left(\alpha_{x}^{n-1}(V),x^n\right)\ \text{, respectively }\ \bigsqcup_{n\in\ZZ^+} \left(V,x^{-n}\right).
%$$
$$
\bigsqcup_{n\in\NN} \alpha_{x}^{n}(V)x^n\ \text{, respectively }\ \bigsqcup_{n\in\NN} Vx^{-n}.
$$
These semigroups are multiplicative over $V$ but are not maximal. 
\begin{prop}
\label{prop:semidirect}
Let $G = H\rtimes_\alpha \ZZ$ with $V\leqslant H$ as in \eqref{eq:semidirect}. Then 
$$
\mathcal{S}_+ = \left\{ (h,n)\in H\rtimes_\alpha \ZZ \colon n\geq0\right\}\text{ and }\mathcal{S}_- = \left\{ (h,n)\in H\rtimes_\alpha \ZZ \colon n\leq0\right\}
$$ 
are the only maximal \scalar\ semigroups in $G$. These semigroups are multiplicative over $V$ if and only if $V\triangleleft H$. 
\end{prop}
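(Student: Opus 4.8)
The plan is to reduce the whole proposition to one explicit computation, namely the formula
\[
s((h,n)) =
\begin{cases}
[\alpha(V):V]^{n} & \text{if } n\geq 0,\\
1 & \text{if } n\leq 0,
\end{cases}
\]
valid for every $h\in H$; in particular the scale of $(h,n)$ depends only on~$n$. To obtain it I would conjugate the compact, open subgroup $\alpha^k(V)\times\{0\}$ by $(h,n)$, which yields $h\alpha^{n+k}(V)h^{-1}\times\{0\}$, and then choose $k$ large enough that $h\in\alpha^k(V)$ (possible since $\bigcup_m\alpha^m(V)=H$). For $n\leq0$ we have $\alpha^{n+k}(V)\leqslant\alpha^k(V)$ and $h\in\alpha^k(V)$, so the conjugate is contained in $\alpha^k(V)\times\{0\}$; the associated index is~$1$, and as the scale is the minimum of such indices we get $s((h,n))=1$. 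For $n>0$, applying this to $(h,n)^{-1}$ (whose second coordinate is negative) gives $s((h,n)^{-1})=1$, and then $s((h,n))=\Delta((h,n))$ by \cite[Corollary~1]{Willis94}, where $\Delta$ is the modular function; since $\Delta$ is trivial on the union of compact subgroups~$H$ and $\Delta((e_H,1))=s(\alpha)=[\alpha(V):V]$, one gets $s((h,n))=[\alpha(V):V]^{n}$. (Alternatively, one checks directly that $\alpha^k(V)\times\{0\}$ is tidy for $(h,n)$.)

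Granting the formula, scale-multiplicativity of $\mathcal{S}_+$ and $\mathcal{S}_-$ is immediate, since the exponent $\max(n,0)$ is additive on each of them. The heart of the maximality and uniqueness claims is the dichotomy that no scale-multiplicative semigroup can contain both an element $(h,n)$ with $n>0$ and an element $(h',m)$ with $m<0$. Indeed, multiplicativity would force the scale of the product $(h,n)(h',m)$, whose second coordinate is $n+m$, to equal $[\alpha(V):V]^{n}\cdot1=[\alpha(V):V]^{n}$, whereas the formula gives it as $[\alpha(V):V]^{\max(n+m,0)}$; these disagree because $m\neq0$ and $[\alpha(V):V]>1$. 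It follows that any scale-multiplicative semigroup lies in $\mathcal{S}_+$ (if it meets $\{n>0\}$), in $\mathcal{S}_-$ (if it meets $\{n<0\}$), or in $H\subseteq\mathcal{S}_+\cap\mathcal{S}_-$ otherwise. Since $\mathcal{S}_+$ and $\mathcal{S}_-$ are themselves scale-multiplicative and neither contains the other, they are exactly the maximal scale-multiplicative semigroups.

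For the final equivalence note first that $V\subseteq H\subseteq\mathcal{S}_\pm$, so only the multiplicativity of $s_V$ is at issue. If $V\triangleleft H$ then $\alpha^k(V)\triangleleft H$ for all $k$ (apply $\alpha$ and use that it is onto $H$), so conjugation by $h$ acts trivially on $\alpha^n(V)$ and $s_V((h,n))=[\alpha^n(V):\alpha^n(V)\cap V]$ equals $s((h,n))$; multiplicativity of $s_V$ then follows from that of $s$. Conversely, if $\mathcal{S}_+$ (or $\mathcal{S}_-$) is multiplicative over $V$, then Proposition~\ref{prop:scalar=>tidy} makes $V$ minimizing for every element, so in particular $s_V((h,0))=s((h,0))=1$, i.e.\ $hVh^{-1}\subseteq V$, for all $h\in H$; applying this to $h^{-1}$ as well gives $hVh^{-1}=V$, so $V\triangleleft H$.

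The one genuinely delicate point is the scale computation for $n>0$ (equivalently the scale-$1$ claim for $n<0$): conjugating a compact, open subgroup by $(h,n)$ introduces conjugation by $h\in H$, which could in principle alter the relevant index. The device that disposes of this---computing with $\alpha^k(V)\times\{0\}$ for $k$ so large that $h$ already lies in $\alpha^k(V)$---is exactly where the hypothesis $\bigcup_n\alpha^n(V)=H$ enters, and it is also what explains the last sentence: the $h$-conjugation is invisible to the scale but visible to $s_V$ unless $V$ is normal in $H$.
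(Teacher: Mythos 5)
Your proposal is correct and follows essentially the same route as the paper: both hinge on the explicit formula $s((h,n))=q^{\max(n,0)}$ with $q=[\alpha(V):V]$, obtained by conjugating $\alpha^k(V)$ for $k$ large enough that $h\in\alpha^k(V)$, and then on the observation that the exponent is additive exactly when the $\ZZ$-coordinates have the same sign, which gives both multiplicativity of $\mathcal{S}_\pm$ and the fact that every \scalar\ semigroup lies in one of them. The only cosmetic difference is that for $n>0$ you pass through $s(x)=\Delta(x)s(x^{-1})$, whereas the paper verifies directly that $\alpha^{m-n}(V)$ is minimizing for $(h,n)$; the normality criterion is handled identically in both arguments.
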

\begin{proof}
Put $q:=[\alpha(V) : V] >1$. 
We derive a formula for the scale of 
$(h,n)$ in $H\rtimes_\alpha \ZZ$  in terms of $q$.
For this, choose~$m$ 
such that 
$h\in\alpha^{m}(V)$. 
Then 
$$
(h,n)\alpha^{m-n}(V)(h,n)^{-1} = (h,0)\alpha^m(V)(h,0)^{-1} = \alpha^m(V).
$$ 
Hence, if $n\geq0$, then $(h,n)\alpha^{m-n}(V)(h,n)^{-1} \geq \alpha^{m-n}(V)$ and it follows that $\alpha^{m-n}(V)$ is minimizing for~$(h,n)$ and 
$$
s(h,n) = [\alpha^m(V) : \alpha^{m-n}(V)] = q^n.
$$
If $n\leq0$, then $(h,n)\alpha^{m-n}(V)(h,n)^{-1} \leqslant \alpha^{m-n}(V)$ and $s(h,n) = 1$. Therefore
$$
s(h,n) =  \begin{cases}
q^n, & \text{ if }n\geq0\\
1, & \text{ if }n<0
\end{cases}.
$$

Since the product of $(g,m)$ and $(h,n)$ in $G$ is $(g,m)(h,n) = (g\alpha^m(h), m+n)$, 
%Udo
and 
$q>1$,  
it follows that 
\begin{align}
\label{eq: decreasing}
s((g,m)(h,n)) < s(g,m)s(h,n) &\text{ if $m$ and $n$ have opposite signs, and  } \\
\label{eq: multiplicative}
s((g,m)(h,n)) = s(g,m)s(h,n) &\text{ if $m$ and $n$ have the same sign. }
\end{align}
By~(\ref{eq: multiplicative}), $\mathcal{S}_+$ and $\mathcal{S}_-$ are \scalar\ semigroups.
Since $G = \mathcal{S}_+\cup \mathcal{S}_-$,~(\ref{eq: decreasing}) implies that every \scalar\ semigroup is contained in one of  $\mathcal{S}_+$ or $\mathcal{S}_-$. It follows that 
both $\mathcal{S}_+$ and $\mathcal{S}_-$ are maximal \scalar\ semigroups, and that $G$ contains no others.

We now prove that $\mathcal{S}_+$ and $\mathcal{S}_-$ are multiplicative over $V$ if and only if $V\triangleleft H$.
Assume that $\mathcal{S}_+$ is multiplicative over $V$. Then, in particular, the subgroup $H$ is multiplicative over $V$ and $H$ normalizes $V$, by Corollary~\ref{cor:scalar=>tidy}. That $V\triangleleft H$ if $\mathcal{S}_-$ is multiplicative over $V$ follows similarly. On the other hand, if $V\triangleleft H$, then $(h,n)V(h,n)^{-1} = \alpha^n(V)$ for every $(h,n)\in G$ and it follows that 
\begin{align}
 s_V(h,n) &= [(h,n)V(h,n)^{-1} : (h,n)V(h,n)^{-1}\cap V] \nonumber
= [\alpha^n(V) : \alpha^n(V)\cap V]   \label{eq: s_v=s}\\
%= q^n
% =  s(1_H,n)
 &=  s(h,n)
\end{align}
for every $(h,n)\in H\rtimes_\alpha \ZZ$. 
Then
$s_V$ is multiplicative on
$\mathcal{S}_+$ and $\mathcal{S}_-$ because $s$ is, and $\mathcal{S}_+$ and $\mathcal{S}_-$ are
multiplicative over~$V$.  
\end{proof}

As previously remarked, every group $G$ that is not uniscalar has closed subgroups isomorphic to some $H\rtimes_\alpha\ZZ$. Multiplicative semigroups in each of these subgroups extend to maximal \scalar\ semigroups in $G$. 
We recall 
in the next section 
a canonical action of 
$H\rtimes_\alpha\ZZ$  
on a homogenous tree.  
The structure space of maximal \scalar\ semigroups will consequently be related to this tree.  
It will be seen also that this relationship is not straightforward even when $G$ itself acts on a tree. 

\section{Highly transitive automorphism groups of locally finite trees} 

In this section 
we determine 
the maximal \scalar\ semigroups 
for groups of automorphisms of a tree, $T$, that act 2-transitively on the boundary $\partial T$. Such groups are discussed by F.~Choucroun in~\cite{TAct.harm.c}. The standing assumption in this section is that the group~$G$ has this property, and it will be seen in Theorem~\ref{thm:class(scalar-semigroups)} that the maximal \scalar\ semigroups are closely identified with features of $T$. 
%many 
%groups of tree-automorphisms,  
%covering  
%the examples 
%studied by Choucroun 
%in~\cite{TAct.harm.c}. 
% 
%It turns out 
%that 
%a qualitative description 
%of maximal \scalar\ semigroups 
%of a closed subgroup 
%of the automorphism group 
%of a locally finite tree 
%is possible, 
%if 
%the action 
%of the group 
%is %know to be 
%sufficiently transitive. 
% 
%The condition 
%we impose 
%is quite technical 
%(see Condition~\ref{cond:axes-branching}). 
% 
%It allows us 
%however, 
%to analyze 
%\scalar\ semigroups 
%in terms of 
%the length function 
%only. 
% 
%This 
%fact 
%will simplify 
%our arguments 
%considerably. 
%

%
%Condition~\ref{cond:axes-branching} 
%holds 
%for 
%a  range 
%of examples that
%%of interest
%includes the following.  
%% 
%%Some of these 
%%are now enumerated. 
%
%%
%\begin{example}\label{ex:transitivity-properties}
%%~\\[-3ex]
%Condition~\ref{cond:axes-branching} 
%is satisfied by groups 
%that act $2$-transitively 
%on the set of ends 
%of a tree 
%such that 
%every vertex 
%has valency 
%at least three. 
%\end{example} 

%
Automorphisms 
of a tree are classified 
into elliptic and hyperbolic types, see~\cite{arbre}. 
Terminology and results relating to 
this classification to be used below will be briefly 
reviewed before proceeding. The vertices of $T$ will be denoted by~$V(T)$ and its edges by~$E(T)$. We adopt the convention that the edge between vertices $v$ and $w$ comprises the ordered pairs $(v,w)$ and $(w,v)$. Then $(v,w)$ is the \emph{oriented edge} from $v$ to $w$ and $v$ is 
the \emph{initial vertex 
of the edge\/} 
while $w$
is the 
\emph{the terminal vertex\/}. If~$e$ is an oriented edge, then $o(e)$ and $t(e)$ denote its initial and terminal vertices respectively, and $\bar{e}$ is the oppositely oriented edge with initial vertex~$t(e)$ and terminal vertex~$o(e)$. 

% 
%While 
%discussing 
%these results, 
%we found it convenient 
%to view 
%trees 
%not only as graphs, 
%but also 
%as metric spaces 
%which can be done by 
%%letting 
%each edge 
%be 
%isometric to 
%the unit interval. 

% 
Every automorphism
of a tree 
either 
fixes a vertex, 
inverts an edge 
or induces a 
nontrivial translation 
%of non-zero amplitude 
along an infinite geodesic
in the tree. 
The first two types 
are called \emph{elliptic}
and have at least one fixed point in the tree.% 
\footnote{Note that, 
in contrast with 
 some parts of 
the literature,
we consider tree automorphisms  
that invert edges, 
so-called \emph{inversions\/},  
to be elliptic. }
The last type is 
\emph{hyperbolic}, 
in which case 
the geodesic of translation
is unique
and called 
the \emph{axis\/}. 
Define the 
length,~$\ell(g)$, of
an automorphism $g$ of the tree to be 
the minimum distance, in the usual graph metric, 
by which it moves 
points in the tree. 
The length of $g$ is $0$ for $g$ elliptic, 
and is the length of translation
along the axis for $g$ hyperbolic.

% 
%We adopt the convention 
%that 
%an isometry 
%of the tree 
%is  called \emph{hyperbolic\/}  
%if and only if 
%the value of~$\ell$ 
%is non-zero 
%at the isometry 
%and 
%call the isometry \emph{elliptic\/} 
%otherwise. 
% 
Define 
the \emph{minimal set for $g$} to be 
\begin{equation*}
\min(g):=\{p\in T\colon d(p,g.p)=\ell(g)\}.
\end{equation*}
Thus 
$\min(g)$ is 
the set of fixed points 
of~$g$ 
if~$g$ is elliptic 
and 
the axis
of~$g$ 
if~$g$ is hyperbolic. 

Two oriented edges 
in a tree are 
\emph{coherent\/} if 
the distance 
between their respective initial vertices 
equals 
the distance 
between their terminal vertices 
%they are equal 
%or 
%the smallest subtree 
%containing 
%the initial vertices 
%from both edges 
%contains  
%exactly one 
%of the edges
and 
\emph{incoherent\/} 
otherwise. 
%	% 
%	An isometry 
%	is said to 
%	\emph{respect the orientation\/}
%	of an oriented edge 
%	if and only if 
%	the edge 
%	and 
%	its image 
%	under the isometry 
%	are coherently oriented. 
% 
A hyperbolic automorphism, 
$h$ say,  
is said to 
%Udo 
%\emph{translate 
%in the direction 
%indicated by 
%an oriented edge\/}, 
\emph{translate 
along\/} 
$(o(e),t(e))$ 
if and only if 
%$d(o,h.t)=d(o,t)+d(t,h.t)$; 
%if this 
%is the case, 
the oriented edges~$(o(e),t(e))$ and~$(h.o(e),h.t(e))$ 
are 
%automatically 
coherent and $d(o(e),h.t(e)) = d(t(e),h.o(e))+ 2$. 

%the smallest subtree 
%containing 
%the initial vertices 
%of the edge 
%and 
%its image  
%under~$h$ 
%contains  
%the edge. 

% 
A semi-infinite geodesic in a tree is a \emph{ray}.
Two rays 
in a tree 
are said to 
belong to 
\emph{the same end\/}, 
if and only if 
their intersection 
is a ray. 
This defines 
an equivalence relation 
on the set of rays 
in the tree, 
whose equivalence classes 
are called 
\emph{the set of ends 
of the tree\/}. 
The automorphism group 
of the tree 
acts 
on 
its set of ends. 
Given a hyperbolic automorphism, 
$h$ say, 
each ray $\mathfrak{r}\subseteq\min(h)$ satisfies either 
$h.\mathfrak{r}\subset \mathfrak{r}$ or $h.\mathfrak{r}\supset \mathfrak{r}$ and these conditions distingish  
two distinct ends of the axis of $h$.  The first is called 
\emph{the attracting end of~$h$} 
and denoted by~$\epsilon_+(h)$, while the second is called
\emph{the repelling end of~$h$} 
and denoted by~$\epsilon_-(h)$

We introduce some notation for segments. Given two vertices $v$ and $w$, by the \emph{segment} $]v,w]$ we mean the set of vertices on the path from $v$ to $w$ not including~$v$.  
By Lemma~31 from~\cite{direction(aut(tdlcG))},
the scale of a hyperbolic element $h$ is 
the product 
\begin{equation}\label{scale of segment}
\prod_{v\in ]w,h.w]}q(v),
\end{equation}
where $q(v)$ is the valency of the vertex $v$ minus 1
in the minimal invariant subtree of $G_{\epsilon_+(h)}$, and $w$ is any vertex on the axis of $h$.

Having recalled 
basic results 
for groups acting on trees 
we now 
briefly 
return to 
the groups $H\rtimes_\alpha\ZZ$ 
considered in 
the last section, 
whose basic importance 
for \scalar\ semigroups 
was explained there. 

The group $H\rtimes_\alpha\ZZ$ 
is an HNN-extension and consequently acts on a homogeneous tree $T_{q+1}$ with valency $q+1$ and fixes an end $\omega$, see~\cite[Section~4]{contraction(aut(tdlcG))}. 
Denote the corresponding representation by $\rho\colon H\rtimes_\alpha\ZZ \to \text{\rm Aut}(T_{q+1})$. The kernel of $\rho$ is the largest compact, normal subgroup of $H\rtimes_\alpha \ZZ$, the group $\rho(H)$ is contained in the set of elliptic automorphisms of the tree, and $\rho(\alpha)$ is a hyperbolic element that has $\omega$ as its attracting end. 
Then 
the maximal \scalar\ semigroups 
in $H\rtimes_\alpha\ZZ$  
have the following characterization. 
\begin{align*}
\mathcal{S}_+ &= \left\{ x\in H\rtimes_\alpha\ZZ \colon \rho(x)\text{ is elliptic or } \rho(x) \text{ has }\omega\text{ as its attracting end}\right\}\\
\intertext{ 
and }
\mathcal{S}_- &= \left\{ x\in H\rtimes_\alpha\ZZ \colon \rho(x)\text{ is elliptic or } \rho(x) \text{ has }\omega\text{ as its repelling end}\right\}.
\end{align*}
Denote the axis of $\rho(\alpha)$ by $\mathfrak{l}$. 
Then $\rho(V)$ is the fixator of a ray,  $[v_0,\omega)$ say, on $\mathfrak{l}$. 
Label the other vertices on $\mathfrak{l}$ as $v_n = \rho(\alpha^n).v_0$. 
Then the subset 
$$
(\alpha^{n-1}(V), n) := \left\{ (x,n) \colon x\in \alpha^{n-1}(V)\right\} \subseteq H\rtimes_\alpha\ZZ
$$ 
of the semigroup generated by $V$ and $\alpha$ translates $v_{j}$ to $v_{j+n}$ for every $j\geq-1$, and the subset $(V,{-n})$ of the semigroup generated by $V$ and $\alpha^{-1}$ translates $v_j$ to $v_{j-n}$ for every $j\geq n$. 

This explains 
how maximal \scalar\ semigroups 
are represented geometrically 
for these examples. 
We next return to 
groups with a 2-transitive action 
on the set of ends 
of a tree. 

The following lemma
will be used 
on numerous occasions 
in this section. 
It appears in~\cite[Lemma~1.2]{l.rank1} 
and~\cite[Lemma~6.8]{CovGG},  
and 
implicitly in~\cite[Lemme~3.1]{arbre}.

\begin{lemma}
\label{lem:detect-axis}
Suppose that 
an oriented edge, $e$,  
and its image 
under an automorphism~$g$ 
differ 
and 
%are coherently oriented. 
are coherent. 
Then 
$g$ is hyperbolic 
and the edges 
$e$ and $g.e$  
lie 
on the axis 
of~$g$.  
\endproof
\end{lemma}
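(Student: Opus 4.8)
The plan is to argue directly from the definitions of coherence and of the minimal set $\min(g)$, reducing everything to a statement about distances. Let me recall the setup: we have an oriented edge $e$ with $e \neq g.e$, and $e$ and $g.e$ are coherent, meaning $d(o(e), o(g.e)) = d(t(e), t(g.e))$, that is, $d(o(e), g.o(e)) = d(t(e), g.t(e))$.

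First I would show that $g$ cannot be elliptic. If $g$ were elliptic it would fix a vertex (or invert an edge), so $\min(g)$ would be its nonempty fixed-point set and $\ell(g) = 0$. The key observation is that if an automorphism fixes a point $p$, then for any vertex $u$ the quantity $d(u, g.u)$ is determined by how far $u$ sits from the fixed set: concretely, moving from $o(e)$ to $t(e)$ along the edge $e$ either approaches or recedes from the fixed set of $g$ by exactly one step, so $d(o(e), g.o(e))$ and $d(t(e), g.t(e))$ must differ by exactly $2$ (they cannot be equal), contradicting coherence. The cleanest way to make this precise is to consider the projection of $o(e)$ and $t(e)$ onto $\min(g)$ and track the displacement function $u \mapsto d(u, g.u)$, which is known to be an even, convex function on the tree that attains its minimum $\ell(g)$ precisely on $\min(g)$; across a single edge its value changes by $0$ or $\pm 2$, and it changes by $0$ only when both endpoints lie on the same side at equal distance, forcing $e = g.e$ when $g$ is elliptic. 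So coherence with $e \neq g.e$ rules out the elliptic case, and $g$ must be hyperbolic.

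Having established that $g$ is hyperbolic, it remains to show $e$ and $g.e$ lie on the axis $\min(g)$. Here I would again use the displacement function $\varphi(u) = d(u, g.u)$: it equals $\ell(g)$ exactly on the axis, strictly exceeds $\ell(g)$ off the axis, and (being convex with slopes $\pm 1$ away from the axis) changes by $\pm 2$ on each edge not meeting the axis. Coherence says $\varphi(o(e)) = \varphi(t(e))$, so the edge $e$ cannot point strictly toward or strictly away from the axis; the only remaining possibility is that both endpoints already lie on the axis, i.e.\ $e$ lies on $\min(g)$, and then $g.e$ does as well since $g$ preserves its axis.

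The main obstacle I anticipate is handling the inversion case carefully, since this paper (by the footnote) counts edge-inversions as elliptic, so $\min(g)$ for an inverting automorphism is empty as a vertex set and one must phrase the displacement argument on the barycentric subdivision or track midpoints of edges to make $\varphi$ behave convexly. Once the displacement function is set up on the subdivided tree so that an inversion becomes a rotation about an edge-midpoint, the same monotonicity-across-an-edge argument closes the gap, and coherence again forces the displacement to be constant across $e$, which is impossible unless $e = g.e$.
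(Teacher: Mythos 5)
The paper offers no proof of this lemma --- it is stated with \verb|\endproof| and a citation to Lubotzky, Bass and Tits --- so your proposal has to be judged on its own terms rather than against an argument in the text. Your displacement-function strategy is a standard and essentially correct route: the identities $d(u,g.u)=2\,d(u,\operatorname{Fix}(g))$ for $g$ elliptic with a fixed vertex, and $d(u,g.u)=\ell(g)+2\,d(u,\min(g))$ for $g$ hyperbolic, combined with the observation that the distance to a (nonempty) subtree changes by exactly $1$ across any edge not contained in that subtree, do rule out the vertex-fixing elliptic case and do force $e$, and hence $g.e$, onto the axis when $g$ is hyperbolic. That part of the argument is sound, granting the two displacement formulas you invoke as known.

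The gap is in the inversion case, and the patch you sketch does not close it. Suppose $g$ inverts the geometric edge underlying $e$ itself, say $e=(v,w)$ with $g.v=w$ and $g.w=v$. Then $g.e=\bar{e}\neq e$, and $d(v,g.v)=d(w,g.w)=1$, so $e$ and $g.e$ are coherent under the paper's literal definition --- yet $g$ is elliptic. Your closing claim, that constancy of the displacement across $e$ is ``impossible unless $e=g.e$'', is therefore false at exactly this point: on the barycentric subdivision the displacement function is not monotone across the inverted edge, since it dips to its minimum at the midpoint, which lies in the interior of $e$, so the ``changes by $\pm 2$ across each edge'' principle fails for this one edge. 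To complete the proof you must either add the hypothesis $g.e\neq\bar{e}$ (the standard safeguard, and one that is satisfied everywhere the paper invokes the lemma, since there $e$ and $g.e$ are always distinct geometric edges a positive distance apart along a common path) or stipulate that coherence is a relation between distinct geometric edges. With that single exclusion made explicit, your argument goes through; without it, the statement you are proving is literally false and no proof can succeed.
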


Proposition~3.4 in~\cite{arbre} 
states that 
every group of elliptic automorphisms 
of a tree 
fixes 
either 
a vertex, 
inverts an edge,  
or 
fixes 
an end 
of the tree. 
The  
arguments to follow require 
a version of this result 
for semigroups. 
One way to establish this is to work through the proof in \cite[Proposition~7.2]{CovGG}, 
which uses~\cite{Lambda-trees}, 
and observe that it also applies to semigroups. 
We take the alternative approach of deducing 
it from the corresponding statement for groups.

\begin{lemma}
\label{lem:struc(elliptic_semigroups)}
Every semigroup of elliptic automorphisms 
of a tree 
fixes 
either 
a vertex, 
inverts an edge,  
or 
fixes 
an end 
of the tree. 
% 
%Furthermore, 
%it is 
%a group 
% if it is closed. 
\end{lemma}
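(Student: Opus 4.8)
The plan is to deduce the semigroup statement from the corresponding result for groups (Proposition~3.4 in~\cite{arbre}), as the excerpt announces. The natural device is to pass from a semigroup $\mathcal{S}$ of elliptic automorphisms to the group $\langle \mathcal{S}\rangle$ it generates, prove that $\langle\mathcal{S}\rangle$ still consists of elliptic automorphisms, and then invoke the group result. The key obstacle, and the heart of the matter, is exactly this: a product of two elliptic automorphisms need not be elliptic in general, so one cannot blindly apply the group theorem to $\langle\mathcal{S}\rangle$. I must show that under the hypothesis that every element of the \emph{semigroup} is elliptic, the generated group is also elliptic.

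First I would recall the standard fact that a group of automorphisms of a tree all of whose elements are elliptic is itself elliptic in the sense of fixing a vertex, inverting an edge, or fixing an end; this is precisely the content one extracts from~\cite[Prop.~3.4]{arbre} once one knows the group is ``bounded'' or ``elliptic''. So the real task reduces to: \emph{if $\mathcal{S}$ is a semigroup of elliptic automorphisms, then every element of $\langle\mathcal{S}\rangle$ is elliptic.} An element of $\langle\mathcal{S}\rangle$ is a word $g = s_1^{\epsilon_1}\cdots s_k^{\epsilon_k}$ with $s_i\in\mathcal{S}$ and $\epsilon_i\in\{\pm1\}$. The difficulty is the inverses: $\mathcal{S}$ is only closed under products, not under inversion, so a word in $\langle\mathcal{S}\rangle$ involves the $s_i^{-1}$, which a priori need not lie in $\mathcal{S}$.

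To handle this I would argue by contradiction using Lemma~\ref{lem:detect-axis}. Suppose some $g\in\langle\mathcal{S}\rangle$ is hyperbolic. Since $g^{-1}$ is then also hyperbolic, and since $\langle\mathcal{S}\rangle$ is generated by $\mathcal{S}$ as a group, I may assume after replacing $g$ by $g^{-1}$ if necessary that $g$ is expressed as a word in elements of $\mathcal{S}$ and their inverses. The strategy is to produce from a hyperbolic $g$ an honest \emph{element of $\mathcal{S}$} that is forced to be hyperbolic, contradicting the hypothesis. The cleanest route is to exploit that if $g$ is hyperbolic with axis $A$ and translation length $\ell(g)>0$, then for a vertex $v$ far out along $A$ in the attracting direction, $g$ moves $v$ coherently a fixed positive distance; one then analyses how the individual generators act. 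Concretely, if $g = s_1\cdots s_k$ with all $s_i\in\mathcal{S}$ (the case of a pure positive word), then telescoping $d(v,g.v)\le \sum d(s_i\cdots s_k.v,\, s_{i+1}\cdots s_k.v)$ together with the ellipticity of each $s_i$ bounds the displacement, and for suitably chosen $v$ this yields $\ell(g)=0$, a contradiction. For words containing inverses, I would instead observe that $g$ and its powers $g^n$ all lie in $\langle\mathcal{S}\rangle$, and use the fact that some power or product involving $g$ can be arranged to lie in $\mathcal{S}$ itself, or else use the coherence criterion of Lemma~\ref{lem:detect-axis} to detect the axis via an edge moved coherently by a genuine semigroup element.

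The hard part, as signalled above, is controlling the inverses. I expect the decisive technical step to be showing that a hyperbolic element of $\langle\mathcal{S}\rangle$ would force the existence of an oriented edge $e$ and a semigroup element $s\in\mathcal{S}$ with $s.e\ne e$ and $e,\,s.e$ coherent, so that Lemma~\ref{lem:detect-axis} makes $s$ hyperbolic and contradicts the assumption that $\mathcal{S}$ is elliptic. Establishing that such an $e$ and $s$ exist — rather than merely finding a coherent pair under a group element that uses inverses — is where the semigroup hypothesis must be used delicately, and it is the one step I would take most care over. Once $\langle\mathcal{S}\rangle$ is known to be elliptic, the three alternatives (fixed vertex, inverted edge, fixed end) follow for $\langle\mathcal{S}\rangle\supseteq\mathcal{S}$ directly from the group statement, completing the proof.
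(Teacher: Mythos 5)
Your overall plan---reduce to Tits' Proposition~3.4 by showing that the group generated by $\mathcal{S}$ consists of elliptic elements---is a reasonable target, but the proposal does not actually prove the one statement it correctly identifies as the crux. You acknowledge that the decisive step is to show that a hyperbolic element of $\langle\mathcal{S}\rangle$ (a word involving inverses of elements of $\mathcal{S}$) forces the existence of a genuine element $s\in\mathcal{S}$ and an edge $e$ with $e\ne s.e$ coherent, and you then describe this as ``the one step I would take most care over'' without carrying it out. That step is essentially the entire content of the lemma, so what you have written is an outline of where the difficulty lives, not a proof. In addition, the auxiliary argument you do sketch is off target: for a pure positive word $g=s_1\cdots s_k$ with all $s_i\in\mathcal{S}$ there is nothing to prove, since $g\in\mathcal{S}$ by the semigroup property and is elliptic by hypothesis; and the telescoping bound $d(v,g.v)\le\sum_i d(s_i\cdots s_k.v,\,s_{i+1}\cdots s_k.v)$ gives no control anyway, because an elliptic automorphism can move a vertex far from its fixed-point set an arbitrarily large distance.

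The paper's proof avoids manipulating words with inverses altogether by a topological device you may want to compare with. By Lemma~\ref{lem:detect-axis}, the hyperbolic automorphisms form an open set (any automorphism agreeing with a hyperbolic one on two adjacent vertices of its axis is hyperbolic), so the elliptic automorphisms form a closed set and one may replace $\mathcal{S}$ by its closure. For a single elliptic element $g$, the closed semigroup it generates lies in the (compact, since the tree is locally finite) stabilizer of a point fixed by $g$; a compact subsemigroup of a topological group is a group, so $g^{-1}$ lies in the closure of $\{g^n\colon n\ge 1\}$ and hence in $\overline{\mathcal{S}}$. Thus $\overline{\mathcal{S}}$ is already a group of elliptic automorphisms and Tits' result applies directly. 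If you wish to salvage your combinatorial route, you must either supply the missing argument producing a coherently displaced edge under an element of $\mathcal{S}$ itself, or adopt some such mechanism for obtaining inverses for free.
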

\begin{proof}
The set of hyperbolic automorphisms of the tree is open because, given a hyperbolic automorphism $x$, any automorphism that agrees with $x$ on two adjacent vertices on its axis is also hyperbolic, by Lemma~\ref{lem:detect-axis}. Hence the set of elliptic automorphisms of the tree is closed and it may be assumed that the given semigroup is closed. Since the closed semigroup generated by a single elliptic automorphism is compact, it is a group. Hence the given closed semigroup is in fact a group and Proposition~3.4 in~\cite{arbre} applies. 
\end{proof}

The above proof in fact shows more than is asserted in the lemma. 
\begin{corollary}
\label{cor:struc(elliptic_semigroups)}
Every closed semigroup of elliptic automorphisms of a tree is in fact a group. If this group does not fix an end of the tree, then it is compact.
\endproof
\end{corollary}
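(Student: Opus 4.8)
The plan is to read off both assertions from the proof of Lemma~\ref{lem:struc(elliptic_semigroups)} together with the standing hypothesis that $T$ is locally finite. The first sentence of the corollary has in fact already been established: the argument for the lemma showed that the set of elliptic automorphisms is closed (the hyperbolic ones forming an open set by Lemma~\ref{lem:detect-axis}), that the closed semigroup generated by a single elliptic automorphism is compact and hence a group, and that consequently any closed semigroup of elliptic automorphisms is itself a group. So I would begin by letting $\mathcal{S}$ be a closed semigroup of elliptic automorphisms, recording that $\mathcal{S}$ is a group by the above, and noting that Lemma~\ref{lem:struc(elliptic_semigroups)} applies to $\mathcal{S}$: it fixes a vertex, inverts an edge, or fixes an end.

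For the compactness claim I would assume that $\mathcal{S}$ does \emph{not} fix an end, so that the trichotomy of the lemma leaves only two cases. If $\mathcal{S}$ fixes a vertex $v$, then every element of $\mathcal{S}$ lies in the stabilizer of $v$ in the automorphism group of $T$. If instead $\mathcal{S}$ inverts an edge, then $\mathcal{S}$ preserves the corresponding geometric edge and every element lies in its setwise stabilizer. In either case $\mathcal{S}$ is contained in the stabilizer of a bounded configuration in $T$.

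The remaining step is to observe that such a stabilizer is compact, and here local finiteness of $T$ is essential. In the automorphism group of a locally finite tree, equipped with the topology of pointwise convergence, the stabilizer of a vertex is a profinite group, being the inverse limit of the finite groups induced on the balls of increasing radius about that vertex; the setwise stabilizer of an edge is likewise compact, since it contains the (compact) pointwise stabilizer of the two endpoints as a subgroup of index at most two. Thus in both cases $\mathcal{S}$ is a closed subset of a compact group, and is therefore compact.

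I expect the only point requiring any care to be the identification, in each of the two non-end cases, of a single compact subgroup containing all of $\mathcal{S}$; the compactness of vertex- and edge-stabilizers of a locally finite tree is standard, and the passage from ``contained in a compact set'' to ``compact'' uses only that $\mathcal{S}$ is closed. No genuine obstacle arises, which is why the result is stated as a corollary to the proof just given rather than proved afresh.
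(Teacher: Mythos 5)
Your proposal is correct and follows exactly the route the paper intends: the corollary is left as a consequence of the proof of Lemma~\ref{lem:struc(elliptic_semigroups)}, namely that the closed semigroup is a group, and then Tits' trichotomy places a non-end-fixing group inside a vertex or edge stabilizer, which is compact by local finiteness. You have merely made explicit the details the paper leaves to the reader.
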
 

Elliptic elements 
normalize 
the stabilizer of 
a point of the tree, 
which is 
a compact, open subgroup. 
Hence the scale function of 
a closed subgroup, $G$, of 
the automorphism group of 
a locally finite tree 
takes the value~$1$ 
at each elliptic element. 
Although hyperbolic elements 
in~$G$ 
can also have scale~$1$, this does not occur under the  hypothesis that $G$ acts $2$-transitively on $\partial T$, as the next result shows.  

\begin{prop}
\label{prop:2-trans_implies_not_uniscalar}
Let $T$ be a tree that has no leaves and for which there is an edge~$\{e,\bar{e}\}$ such that each component of $T\setminus \{e,\bar{e}\}$ has at least two ends. Let $G$ be a group of automorphisms of~$T$ acting 2-transitively on $\partial T$. Then:
\begin{enumerate}
\item for every distinct pair $\omega_1, \omega_2\in \partial T$ the geodesic $]\omega_1,\omega_2[$ in $T$ is the axis of some hyperbolic element in $G$ and, moreover, the hyperbolic elements may be chosen from a single conjugacy class; 
%edit  and, moreover, all the hyperbolic elements may be taken to be conjugates; 
\label{prop:2-trans_implies_not_uniscalar1}
\item if $v$ is a vertex with valency greater than~$2$, then $G_v$ acts $2$-transitively on the sphere with centre~$v$ and radius~$r$ for every~$r\geq1$;
\label{prop:2-trans_implies_not_uniscalar1a}
\item the group $G$ has at most two orbits, say $O$ and $E$, on the set of vertices of $T$ that have valency greater than 2; denoting by $k$ the minimal distance between distinct elements of $O\cup E$, every vertex in $T$ is within $k$ of $O$ and~$E$;
%the vertices of $T$ having valencies greater than~$2$ partition into two subsets, $O$ and $E$, each of which is contained in a single $G$-orbit, and there is $k\geq1$ such that every vertex in $T$ is within distance $k$ of $O$ and $E$; 
\label{prop:2-trans_implies_not_uniscalar2}
\item the vertices of every geodesic $]\omega_1,\omega_2[ = (\dots, v_j, v_{j+1}, \dots)$ in $T$ satisfy that $v_j\in E$ for some $j\in\ZZ$ and that: $v_{j+nk}\in E$ for all even~$n$; $v_{j+nk}\in O$ for all odd~$n$; and $v_i$ has valency~$2$ otherwise; 
\label{prop:2-trans_implies_not_uniscalar2a}
\label{prop:2-trans_implies_not_uniscalar3}
\item every hyperbolic element $h\in G$ translates $]\epsilon_-(h),\epsilon_+(h)[$ through a distance~$nk$ for some $n\in\ZZ$  
and, if $G$ is closed in $\text{\rm Aut}(T)$, the scale function of $G$ is given by 
\begin{equation}
\label{eq:scale_calc}
s(g) =(q_Oq_E)^{\ell(g)/2k}
\end{equation}
where $q_E+1$ and $q_O+1$ are the valencies of vertices in $E$ and $O$ respectively;
%and the scale of $h$ is
%\begin{equation}
%\label{eq:scale_calc}
%s(h) = \begin{cases}
%q_E^{n/2}q_O^{n/2}, & \hbox{ if }n\hbox{ is even,}\\
%q^n, & \hbox{ if }n\hbox{ is odd,}
%\end{cases}
%\end{equation} 
%where $q_E+1$ and $q_O+1$ are the valencies of vertices in $E$ and $O$ respectively and both equal~$q$ when there is $h$ for which it is possible that~$n$ is odd; and 
\label{prop:2-trans_implies_not_uniscalar4}
\item for each end $\omega\in \partial T$ the minimal invariant subtree of the subgroup $G_\omega$ is the whole tree.
\label{prop:2-trans_implies_not_uniscalar6}
\end{enumerate}
\end{prop}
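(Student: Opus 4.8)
The plan is to establish the six assertions essentially in the stated order, bootstrapping from the single hyperbolic element guaranteed by the hypothesis on $\{e,\bar e\}$. For the first statement I would begin by producing one hyperbolic element: since each component of $T\setminus\{e,\bar e\}$ carries at least two ends, I can choose an end $\eta$ on one side and distinct ends $\omega\neq\omega'$ on the other and use $2$-transitivity on $\partial T$ to carry a suitable pair of coherent oriented edges lying on a common geodesic onto one another; Lemma~\ref{lem:detect-axis} then certifies that the resulting element is hyperbolic. Once one hyperbolic element $h_0$ with axis $]\alpha,\beta[$ is available, the single-conjugacy-class statement is immediate: given distinct $\omega_1,\omega_2\in\partial T$, choose $g\in G$ with $g.\alpha=\omega_1$ and $g.\beta=\omega_2$ by $2$-transitivity, so that $gh_0g^{-1}$ is hyperbolic with axis $g.]\alpha,\beta[\,=\,]\omega_1,\omega_2[$.

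For the statements about local transitivity, orbits and the periodic pattern I would argue as follows. Part (2) I expect to prove by induction on $r$, representing a point of the sphere of radius $r$ about $v$ by an end of $T$ reached through it and transporting pairs of such ends by $2$-transitivity on $\partial T$. Granting part (2), a branch vertex (valency $>2$) together with $2$-transitivity on its spheres forces $T$ to be semi-regular, so that any two branch vertices at the minimal distance $k$ generate, under $G$, at most the two orbits $O$ and $E$; minimality of $k$ together with the absence of leaves then yields that every vertex lies within $k$ of each orbit, giving part (3). The periodic pattern in part (4) follows by reading off, along a fixed geodesic, the positions of the branch vertices: minimality of $k$ prevents two of them from being closer than $k$, the ``within $k$'' property prevents a gap larger than $k$, and the two-orbit structure forces the alternation $E,O,E,\dots$ at spacing $k$ with valency-$2$ vertices in between.

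The last two statements then follow cleanly. For part (6), note that for every end $\eta\neq\omega$ the geodesic $]\omega,\eta[$ is, by part (1), the axis of a hyperbolic element, which fixes both $\omega$ and $\eta$ and hence lies in $G_\omega$; as $\eta$ ranges over $\partial T\setminus\{\omega\}$ these axes cover $T$, because $T$ has no leaves, so the minimal $G_\omega$-invariant subtree is all of $T$. For part (5), a hyperbolic element $h$ carries its axis to itself preserving valencies and the $G$-orbits $O,E$, so by part (4) its translation length is a multiple of $k$ (a multiple of $2k$ when $O\neq E$); substituting into the product formula~\eqref{scale of segment}, where by part (6) the relevant valencies are the full valencies of $T$, each full period of length $2k$ contributes one factor $q_E$ and one factor $q_O$ while valency-$2$ vertices contribute $1$, yielding the scale formula~\eqref{eq:scale_calc}.

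The main obstacle I anticipate is part (2): $2$-transitivity is given on $\partial T$, but to move a prescribed pair of points of the sphere about $v$ one needs a group element fixing the centre $v$, whereas the element supplied by boundary $2$-transitivity only maps the relevant geodesic line to its image and may translate along it. Pinning $v$ down requires correcting by an element of the stabilizer of the two governing ends, which acts on the line between them through the hyperbolic elements produced in part (1); controlling these translations is exactly the point at which local transitivity becomes entangled with the rigid periodic structure of parts (3)--(5), and carrying out this correction carefully is where the real work lies.
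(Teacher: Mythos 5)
Your outline for parts (3)--(6) follows the paper's proof closely and is essentially sound, but the two steps where you lean hardest on $2$-transitivity of $\partial T$ are exactly the ones that fail as described. First, the production of the initial hyperbolic element in part (1): $2$-transitivity lets you prescribe the images of two \emph{ends}, not of an oriented edge, so the element $g$ you obtain (say fixing $\eta$ and sending $\omega$ to $\omega'$) need not carry any oriented edge to a distinct coherent image on a common geodesic --- it may simply fix the median of $\eta,\omega,\omega'$ and permute branches there, in which case it is elliptic and Lemma~\ref{lem:detect-axis} gives nothing. The paper instead argues by contradiction: if $G$ contained no hyperbolic element it would fix a vertex, invert an edge or fix an end (\cite[Proposition~7.2]{CovGG}); fixing an end is incompatible with $2$-transitivity, and in the other two cases one produces a separating edge $\{f,\bar f\}$ with one component containing an end $\omega$ and the other containing two ends $\omega_1,\omega_2$, whence no $g\in G$ can fix $\omega_1$ while sending $\omega$ to $\omega_2$ --- a contradiction. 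That is where the hypothesis on $\{e,\bar e\}$ actually does work; your sketch never invokes it in a load-bearing way. (Your conjugation step, once one hyperbolic element exists, matches the paper.)

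Second, part (2). You correctly identify the obstacle --- an element sending $\omega_1$ to $\omega_2$ need not fix $v$ --- but your proposed repair does not close it: the stabilizer of the two governing ends acts on the line between them by translations whose lengths form a subgroup $\ell_0\ZZ$ of $\ZZ$, so you cannot in general cancel the displacement of $g$ along that line. The paper's device is different: since $v$ has valency greater than $2$, choose an auxiliary end $\omega$ whose ray from $v$ leaves through an edge not lying in the triangle spanned by $v$, $w_1$, $w_2$, and apply $2$-transitivity to send the ordered pair $(\omega,\omega_1)$ to $(\omega,\omega_2)$; fixing the third end $\omega$ is what pins $v$ down as the branch point of the three rays. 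Without this anchoring, part (2) --- and with it the orbit and periodicity statements (3) and (4) and ultimately the scale formula \eqref{eq:scale_calc} --- is not established.
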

\begin{proof}
\eqref{prop:2-trans_implies_not_uniscalar1} By  \cite[Proposition~7.2]{CovGG}, if $G$ does not contain any hyperbolic element, then it fixes a vertex, inverts an edge or fixes an end of~$T$. The last is ruled out by the 2-transitivity of $G$ on $\partial T$. It may be seen that $G$ cannot fix a vertex either. For this, assume that $v$ is fixed by $G$ and let $\{f,\bar{f}\}$ be the edge adjacent to $v$ that is closest to the edge $\{e,\bar{e}\}$ in the statement of the proposition. Then the component of $T\setminus\{f,\bar{f}\}$ containing $v$ has at least one end,~$\omega$, because $T$ has no leaves, and the component of $T\setminus\{f,\bar{f}\}$ to which~$v$ does not belong contains a component of $T\setminus\{e,\bar{e}\}$ and has at least two ends,~$\omega_1$ and~$\omega_2$. Then no element of $G$ can fix $\omega_1$ and move $\omega$ to $\omega_2$, which contradicts the $2$-transitivity of $G$ on $\partial T$. Similarly, assuming that $G$ inverts an edge, $\{f,\bar{f}\}$, at least one component of $T\setminus \{f,\bar{f}\}$ has two or more ends $\omega_1$ and $\omega_2$ and the other has at least one end~$\omega$. No element of $G$ can fix $\omega_1$ and send $\omega$ to $\omega_2$, which again contradicts the $2$-transitivity of~$G$. Assuming that~$G$ contains only elliptic elements thus leads to a contradiction. Hence $G$ contains a hyperbolic element, $h$ say. 

Since $G$ is $2$-transitive on $\partial T$, there is $g\in G$ that sends $\epsilon_+(h)$ to $\omega_1$ and $\epsilon_-(h)$ to $\omega_2$. Then $ghg^{-1}$ is hyperbolic and $]\omega_1,\omega_2[$ is its axis. 

\eqref{prop:2-trans_implies_not_uniscalar1a} Let $w_1$ and $w_2$ be two vertices on the sphere with centre~$v$ and radius~$r$. Then the `hyperbolic triangle' with vertices $v$, $w_1$ and $w_2$ contains at most two edges incident on~$v$ and so there is an edge, $\{e,\bar{e}\}$ say, incident on~$v$ and not in this triangle. Choose rays $\mathfrak{r}$, $\mathfrak{r}_1$ and $\mathfrak{r}_2$ originating at $v$ and passing through $e$, $w_1$ and $w_2$ respectively, and let $\omega$, $\omega_1$ and $\omega_2$ be the corresponding ends. Since $G$ acts $2$-transitively on $\partial T$, there is $g\in G$ such that $g.\omega = \omega$ and $g.\omega_1 = \omega_2$. Then $g\in G_v$ and $g.w_1 = w_2$. Hence $G_v$ acts $2$-transitively on the sphere as claimed.

\eqref{prop:2-trans_implies_not_uniscalar2}
Let $w_0\ne v_0$ be two vertices with valency greater than~$2$. That there is at least one such vertex, $v_0$ say, follows because $T$ has at least three ends, and that there is a second follows because every vertex in $G.v_0$ has this property. Then every vertex on the sphere with centre $v_0$ and radius $d(v_0,w_0)$ belongs to the orbit $G_{v_0}.w_0$ and every vertex on the sphere with centre $w_0$ and radius $d(v_0,w_0)$ belongs to the orbit $G_{w_0}.v_0$. It follows that all spheres with centre $v_0$ and radius $2nd(v_0,w_0)$, $n\geq1$, are contained in the orbit $G.v_0$. Therefore every vertex is within distance $d(v_0,w_0)$ of $G.v_0$. 

Choose~$v_0$ and~$w_0$ above so that $d(v_0,w_0)$ is minimized and let~ $k$ be this minimum value. Let~$E$ be the set of vertices at distance~$nk$ from~$v_0$ with~$n$ even and~$O$ be the set of vertices at distance~$nk$ with~$n$ odd. Then, as seen above, every vertex is within distance~$k$ of $E$ and $O$, and $E\subseteq G.v_0$ and $O\subseteq G.w_0$. 

\eqref{prop:2-trans_implies_not_uniscalar3}  It will be shown first that there is $j\in\ZZ$ such that $v_j$ belongs to~$E$. Given $u\in ]\omega_1,\omega_2[$, there is $w'\in O$ within distance~$k$. Hence~$u$ is on or inside the sphere with centre~$w'$ and radius~$k$. This sphere is contained in~$E$ and $]\omega_1,\omega_2[$ must pass through it in order to escape to $\omega_1$. Therefore there is a vertex of $]\omega_1,\omega_2[$ that belongs to~$E$. All other claims follow immediately.

\eqref{prop:2-trans_implies_not_uniscalar4}  The claim about translation length follows from the description of geodesics in~\eqref{prop:2-trans_implies_not_uniscalar3}. Since each of~$E$ and $O$ is contained in a single $G$-orbit, all vertices in these sets have the same valency, which is denoted $q_E+1$ and $q_O+1$ as stated. If~$h$ translates through~$nk$ with~$n$ odd, then $E$ and $O$ are both contained in a single $G$-orbit and $q_E = q_O = q$. In this case each segment $]w,h.w]$, with~$w$ on the axis of~$h$, contains~$n$ vertices with valency~$q$ and~\eqref{scale of segment} yields the claimed value of $s(h)$. If~$h$ translates through~$nk$ with~$n$ even, then~$q_E$ and~$q_O$ may differ but each segment $]w,h.w]$, with~$w$ on the axis of~$h$, contains~$n/2$ vertices with each valency~$q_E$ and~$q_O$ and~\eqref{scale of segment} applies again. 

\eqref{prop:2-trans_implies_not_uniscalar6} Every edge $\{e,\bar{e}\}$ lies on a geodesic $]\omega,\omega'[$ for some $\omega'\in\partial T$. Hence,  by~\eqref{prop:2-trans_implies_not_uniscalar1}, $\{e,\bar{e}\}$ lies on the axis of a hyperbolic element in $G_\omega$. Every such axis is contained in the minimal tree invariant under $G_\omega$. 
\end{proof}

\begin{remark}
It has in fact been seen in the proof of Proposition~\ref{prop:2-trans_implies_not_uniscalar} that the existence of a group of automorphisms of~$T$ acting $2$-transitively on $\partial T$ implies, under a weak hypothesis on~$T$ otherwise, that~$T$ may be obtained from a semi-homogeneous tree by subdividing each edge into~$k$ edges. This conclusion is similar to that of~\cite[Th\'eor\`eme~1.6.1]{TAct.harm.c}. 

Therefore for the remainder of this section the tree~$T$ may be assumed, without any loss of generality on the hypotheses of Proposition~\ref{prop:2-trans_implies_not_uniscalar}, to be semi-homogeneous with all vertices having valency at least~$3$.
\end{remark}

It follows immediately from Proposition~\ref{prop:2-trans_implies_not_uniscalar}\eqref{prop:2-trans_implies_not_uniscalar3} that hyperbolic elements~$g$ and~$h$ satisfy $s(gh) = s(g)s(h)$ if and only if their translation lengths add, that is, $\ell(gh) = \ell(g)+\ell(h)$, and similarly if the scale of the product is less than (or greater than) the product of the scales. The next few results, which apply to all trees, describe the circumstances in which translation lengths of hyperbolic elements add.

\begin{lemma}\label{lem:scale(prod(hyperbolics no edge common))}
Let~$t_1$ and~$t_2$ 
be 
hyperbolic automorphisms 
of a tree whose axes
do not have 
any edge 
in common.
Let 
$\mathfrak{d}$ 
the unique shortest path 
connecting 
the axes of~$t_1$ and~$t_2$,
and 
let~$d$ 
be 
the number of 
edges of~$\mathfrak{d}$. 
Then the following are true.
\begin{enumerate}
\item 
The %ir 
product~$t_2t_1$ 
is hyperbolic 
and $\ell(t_2t_1)=\ell(t_2)+\ell(t_1)+2d$.
\label{lem:scale(prod(hyperbolics no edge common))1}
\item 
The axis of $t_2t_1$ contains 
the union of:~$t_1^{-1}(\mathfrak{d})$; 
the last~$\ell(t_1)$ edges 
preceding~$\mathfrak{d}$ 
on the axis 
of~$t_1$; 
$\mathfrak{d}$;  
the first~$\ell(t_2)$ edges 
succeeding~$\mathfrak{d}$ 
on the axis 
of~$t_2$; and~$t_2(\mathfrak{d})$.
%are contained in 
%the axis 
%of~$t_2t_1$. 
% 
\label{lem:scale(prod(hyperbolics no edge common))2}
\item The direction of translation 
of~$t_2t_1$ 
agrees with 
those of~$t_1$ and~$t_2$ 
on the common segments 
of their axes, and is in the direction from the axis of $t_1$ to that of $t_2$ on $\mathfrak{d}$. 
\label{lem:scale(prod(hyperbolics no edge common))3}
\end{enumerate}
\end{lemma}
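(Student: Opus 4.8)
The plan is to establish all three parts at once by constructing an explicit geodesic segment in $T$ that must lie on the axis of $t_2t_1$, and then invoking Lemma~\ref{lem:detect-axis} to certify its hyperbolicity and identify the axis. Let me fix notation first. Write $A_1$ and $A_2$ for the axes of $t_1$ and $t_2$, let $p_1$ and $p_2$ be the endpoints of $\mathfrak{d}$ on $A_1$ and $A_2$ respectively (so $\mathfrak{d}$ is the path from $p_1$ to $p_2$, of length $d$), and recall that when the axes share no edge the path $\mathfrak{d}$ meets each axis in a single vertex, $\mathfrak{d}\cap A_i=\{p_i\}$. The guiding picture is that $t_1$ first pulls a fiducial vertex along its own axis in the direction of $p_1$, then the bridge $\mathfrak{d}$ is traversed once, then $t_2$ pushes along its axis away from $p_2$; the composite traces a path whose length is the sum $\ell(t_1)+\ell(t_2)+2d$ because the bridge is crossed twice, once in each direction, by the two factors.

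\medskip

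\textbf{The main construction.} First I would identify a concrete geodesic $\gamma$ on which $t_2t_1$ acts as a translation. Starting from the vertex $p_2$ on $A_2$, travel back along $A_2$ for $\ell(t_2)$ edges in the repelling direction of $t_2$ to a vertex $a$; continue along the reverse bridge $\overline{\mathfrak{d}}$ from $p_2$ to $p_1$; then continue along $A_1$ for $\ell(t_1)$ edges in the repelling direction of $t_1$ to a vertex $b$. The key claim is that the concatenation of these three arcs is a geodesic (no backtracking at $p_1$ or $p_2$), which holds precisely because the bridge $\mathfrak{d}$ leaves each axis transversally: the first edge of $\mathfrak{d}$ at $p_i$ is not an edge of $A_i$, so the path does not fold back. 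I would then compute the image of a suitable oriented edge $e$ lying at the $A_1$-end of this path under $g:=t_2t_1$ and check that $e$ and $g.e$ are distinct and coherent, with $d$ chosen so the geometry forces translation. Concretely, take $e$ to be an oriented edge on $A_1$ just past $b$; applying $t_1$ translates it by $\ell(t_1)$ toward $p_1$, and applying $t_2$ then carries the image across $\mathfrak{d}$ and $\ell(t_2)$ along $A_2$, so $g.e$ sits on $A_2$ coherently with $e$. By Lemma~\ref{lem:detect-axis}, $g$ is hyperbolic and $e,g.e$ lie on its axis.

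\medskip

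\textbf{Pinning down the axis and the length.} Having shown $g$ is hyperbolic with $e$ and $g.e$ on its axis, part~\eqref{lem:scale(prod(hyperbolics no edge common))2} follows by exhibiting the full geodesic through $e$ and $g.e$: one checks that the axis of $g$ is the bi-infinite geodesic extending the finite path above, whose one period consists of $t_1^{-1}(\mathfrak{d})$, the last $\ell(t_1)$ edges of $A_1$ before $\mathfrak{d}$, the bridge $\mathfrak{d}$, the first $\ell(t_2)$ edges of $A_2$ after $\mathfrak{d}$, and $t_2(\mathfrak{d})$. The translation length $\ell(g)=\ell(t_2)+\ell(t_1)+2d$ of part~\eqref{lem:scale(prod(hyperbolics no edge common))1} is then read off as the total number of edges in this period, counting each of the two bridge-crossings, which gives the $2d$ term. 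Part~\eqref{lem:scale(prod(hyperbolics no edge common))3} is obtained by tracking the translation direction on the overlap segments: on the portion of $A_1$ appearing in the axis, $g$ and $t_1$ agree because $t_2$ acts as an isometry moving that portion rigidly, and symmetrically for $A_2$; on $\mathfrak{d}$ the motion is from $A_1$ toward $A_2$ by the order of composition.

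\medskip

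\textbf{Where the difficulty lies.} The routine parts are the length count and the direction bookkeeping once the axis is known. The genuine obstacle is the no-backtracking verification that the concatenated arc is actually a geodesic and that the chosen edge $e$ satisfies the coherence hypothesis of Lemma~\ref{lem:detect-axis}. This requires using the hypothesis that $A_1$ and $A_2$ share no edge (so $\mathfrak{d}$ has $d\geq1$ and meets each axis only at its endpoint vertex), ruling out the degenerate case where the bridge is tangent to an axis; I would need to argue carefully that the first edge of $\mathfrak{d}$ at $p_1$ differs from the incident edges of $A_1$, and likewise at $p_2$, so that applying $t_1$ and then $t_2$ does not cause the image of $e$ to overlap $e$ in the wrong orientation. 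This is where the coherence of $e$ with $g.e$ — as opposed to incoherence, which would signal an inversion or elliptic element — must be established by an explicit distance comparison.
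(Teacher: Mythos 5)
Your overall strategy is the paper's: exhibit the relevant spanned path, find an oriented edge $e$ with $e\neq g.e$ coherent for $g=t_2t_1$, and invoke Lemma~\ref{lem:detect-axis}. But two steps of the execution are wrong. First, the segment of $A_2$ lying on the axis of $g$ is the $\ell(t_2)$ edges from $p_2$ in the \emph{attracting} direction of $t_2$, namely $[p_2,t_2.p_2]$ (the axis runs $t_1^{-1}.p_1\to p_1\to p_2\to t_2.p_2\to t_2.p_1$, as $g$ maps $t_1^{-1}.p_1$ to $t_2.p_1$ and $t_1^{-1}.p_2$ to $t_2.p_2$). Your path instead takes the $\ell(t_2)$ edges from $p_2$ in the repelling direction of $t_2$; that segment meets the axis of $g$ only in the vertex $p_2$, so the geodesic you build is not one on which $g$ translates, and it would contradict part~(3) of the very statement being proved. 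Second, the coherence check --- which you correctly identify as the crux --- rests on the false claim that $g.e$ ``sits on $A_2$'' for $e$ an edge of $A_1$ near $b=t_1^{-1}.p_1$: since $t_1.e$ is still an edge of $A_1$, the edge $g.e=t_2t_1.e$ lies on $t_2.A_1$, which sits beyond the far end of $t_2.\mathfrak{d}$ and shares no edge with $A_2$. Worse, under the natural reading of ``an edge on $A_1$ just past $b$'' (one step beyond $b$ away from $p_1$), $e$ is not on the axis of $g$ and is \emph{not} coherent with $g.e$: one computes $d(o(e),o(g.e))=\ell(t_1)+\ell(t_2)+2d+2$ while $d(t(e),t(g.e))=\ell(t_1)+\ell(t_2)+2d$.

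The paper sidesteps both problems by testing coherence on the translated bridges rather than on the axes: $g$ carries $t_1^{-1}.\mathfrak{d}$ to $t_2.\mathfrak{d}$, the subtree spanned by these two arcs is the single path from $t_1^{-1}.p_2$ to $t_2.p_1$ of length $\ell(t_1)+\ell(t_2)+3d$, and every edge of $t_1^{-1}.\mathfrak{d}$ is coherent with its image in $t_2.\mathfrak{d}$ because both lie on that path with the same orientation. Lemma~\ref{lem:detect-axis} then yields hyperbolicity and places the whole path on the axis, and $\ell(g)=d(t_1^{-1}.p_2,t_2.p_2)=\ell(t_1)+\ell(t_2)+2d$, with parts~(2) and~(3) read off from the resulting picture. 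Your argument can be repaired by reversing the direction on $A_2$ and choosing the witness edge inside $t_1^{-1}.\mathfrak{d}$ (or, when $d=0$, the first edge of $A_1$ at $t_1^{-1}.p_1$ pointing toward $p_1$), but as written the key verification fails.
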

\begin{proof}
Let $a$ be the vertex common to both $\mathfrak{d}$ and the axis of $t_1$, and $b$ the vertex common to  $\mathfrak{d}$ and the axis of $t_2$. The automorphism $t_2t_1$ maps $t_1^{-1}.\mathfrak{d}$ to $t_2.\mathfrak{d}$. Since $t_1^{-1}.a$ lies on the axis of $t_1$ and $t_2.b$ lies on the axis of $t_2$, the subtree spanned by $t_1^{-1}.\mathfrak{d}$ and $t_2.\mathfrak{d}$ is the path from $t_1^{-1}.b$ to $t_2.a$ of length $\ell(t_1) + \ell(t_2) + 3d$, as shown in~Figure~\ref{not_intersect}. 
\begin{figure}[htbp]
\begin{center}
\begin{center}
\includegraphics[width=14cm]{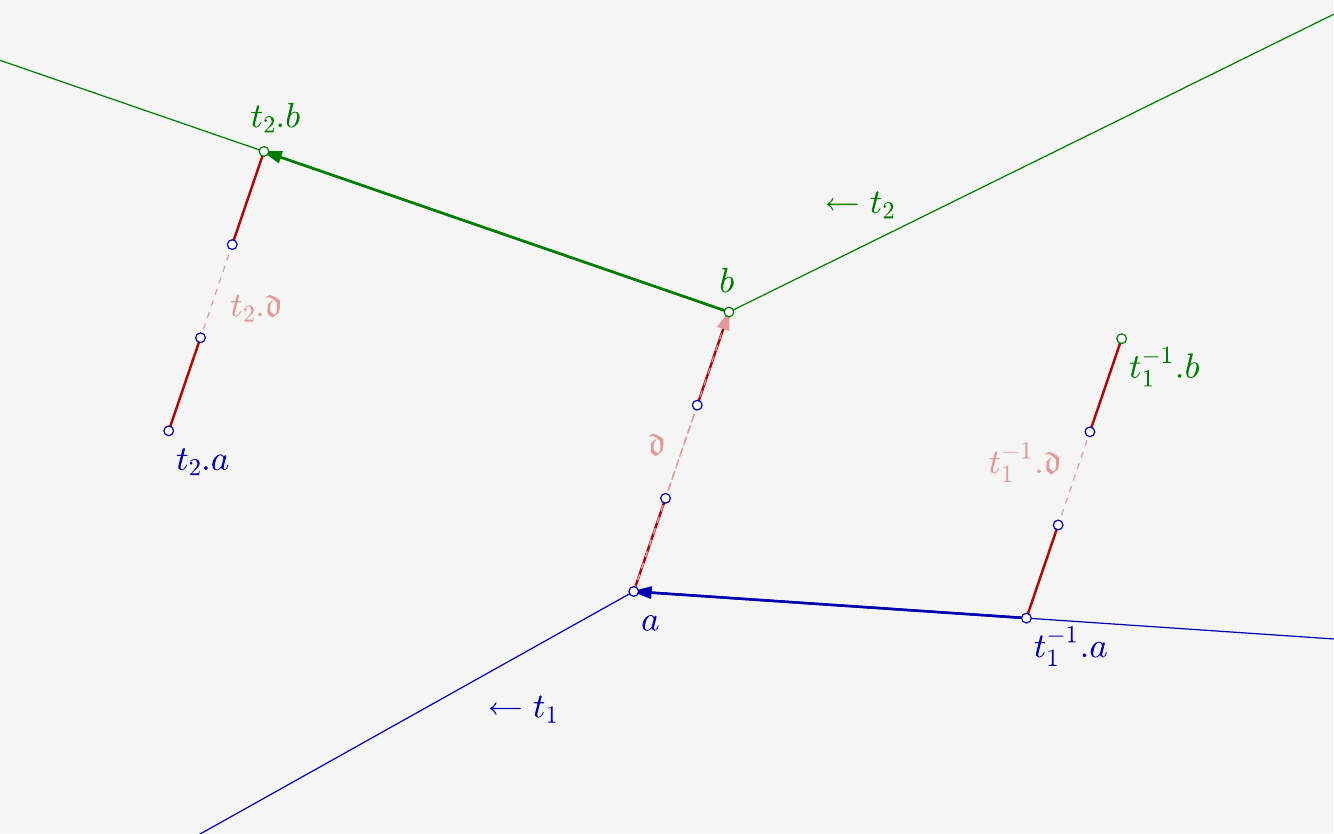}
\end{center}
\caption{{\bf The axes of $t_1$ and $t_2$ do not intersect in an edge}}
\label{not_intersect}
\end{center}
\end{figure}
Since each edge $e\in t_1^{-1}.\mathfrak{d}$ is coherent with $(t_2t_1).e\in t_2.\mathfrak{d}$,  statements~\eqref{lem:scale(prod(hyperbolics no edge common))1}--\eqref{lem:scale(prod(hyperbolics no edge common))3} follow by applying 
Lemma~\ref{lem:detect-axis} and computing the distance from $t_1^{-1}.b$ to $t_2.b$. 
\end{proof}

\begin{lemma}\label{lem:scale(prod(hyperbolics nontrivial intersection coherent))}
Let~$t_1$ and~$t_2$ 
be 
hyperbolic automorphisms 
of a tree whose axes
intersect in a nontrivial path 
$\mathfrak{c}$ 
along which~$t_1$ and~$t_2$ 
translate 
in the same direction.
Then 
\begin{enumerate}
\item  the %ir 
product~$t_2t_1$ 
is hyperbolic
and the 
direction of translation 
of~$t_2t_1$ 
agrees with 
those of~$t_1$ and~$t_2$ 
on the common segments 
of their axes. 
\item
the axis of $t_2t_1$ contains the union
of: the edges in~$\mathfrak{c}$, 
the last~$\ell(t_1)$ edges 
preceding~$\mathfrak{c}$ 
on the axis 
of~$t_1$ 
and 
the first~$\ell(t_2)$ edges 
succeeding~$\mathfrak{c}$ 
on the axis 
of~$t_2$.
%are contained in 
%the axis 
%of~$t_2t_1$
\item  
 $\ell(t_2t_1)=\ell(t_2)+\ell(t_1)$.  
\end{enumerate}
% 
%The direction of translation 
%of~$t_2t_1$ 
%agrees with 
%those of~$t_1$ and~$t_2$ 
%on the common segments 
%of their axes. 
% 
%Further, 
%$s(t_2t_1)=s(t_2)s(t_1)$. 
%
\end{lemma}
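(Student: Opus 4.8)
The plan is to mirror the proof of Lemma~\ref{lem:scale(prod(hyperbolics no edge common))}, but adapted to the case where the axes overlap coherently along a nontrivial path~$\mathfrak{c}$. Let $a$ be the endpoint of $\mathfrak{c}$ from which the axis of $t_1$ departs (i.e. the last vertex of $\mathfrak{c}$ in the direction of translation), and $b$ the endpoint from which the axis of $t_2$ departs. Since $t_1$ and $t_2$ translate in the same direction along $\mathfrak{c}$, I would orient $\mathfrak{c}$ in that common direction and set up coordinates so that the axis of $t_1$ enters $\mathfrak{c}$ at one end and the axis of $t_2$ leaves at the other. The essential computation is to identify an oriented edge~$e$ together with its image $(t_2t_1).e$ and verify that they are coherent and distinct, so that Lemma~\ref{lem:detect-axis} forces $t_2t_1$ to be hyperbolic with both $e$ and $(t_2t_1).e$ on its axis.

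First I would pick an edge $e$ deep on the axis of $t_1$, far enough ``upstream'' (against the direction of translation) that $t_1.e$ still lies on the common path $\mathfrak{c}$, and in fact so that $t_2t_1.e$ lands on the portion of the axis of $t_2$ that continues past $b$. The point of choosing $e$ on the axis of $t_1$ is that $t_1.e$ is coherent with $e$ by definition of translation; and because $t_1.e$ lies in the overlap $\mathfrak{c}$ where $t_2$ also translates coherently in the same direction, $t_2(t_1.e)$ is coherent with $t_1.e$, and hence with $e$ by transitivity of coherence along a geodesic. This gives a coherent pair $e,\,(t_2t_1).e$, so Lemma~\ref{lem:detect-axis} applies and yields statement~(1): $t_2t_1$ is hyperbolic and translates in the direction consistent with $t_1$ and $t_2$ on the shared segments.

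For statements~(2) and~(3) I would then compute the displacement $d(o(e),(t_2t_1).t(e))$ explicitly, tracking how far $t_2t_1$ moves the chosen vertex: $t_1$ advances it by $\ell(t_1)$ along the common direction, landing inside $\mathfrak{c}$, and $t_2$ advances it by a further $\ell(t_2)$, carrying it out beyond $b$ on the axis of $t_2$. Because the two translations cooperate rather than fight, there is no cancellation, so the total displacement is exactly $\ell(t_1)+\ell(t_2)$, giving $\ell(t_2t_1)=\ell(t_1)+\ell(t_2)$. The axis of $t_2t_1$, detected edge-by-edge via Lemma~\ref{lem:detect-axis}, therefore contains $\mathfrak{c}$ together with the $\ell(t_1)$ edges preceding $\mathfrak{c}$ on the axis of $t_1$ and the $\ell(t_2)$ edges succeeding $\mathfrak{c}$ on the axis of $t_2$, as claimed in~(2).

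The main obstacle I anticipate is bookkeeping at the two ends of $\mathfrak{c}$: one must confirm that no unexpected backtracking occurs where the axis of $t_1$ meets $\mathfrak{c}$ and where $\mathfrak{c}$ meets the continuation of the axis of $t_2$, i.e. that the path from $o(e)$ to $(t_2t_1).t(e)$ is genuinely geodesic rather than folding back on itself. This is where the hypothesis that the translations agree in direction along $\mathfrak{c}$ does the real work, ruling out the cancellation that would otherwise shorten the product; I would make this precise by verifying coherence of successive edges across the two junction vertices $a$ and $b$, so that the concatenated path has no reversal. Once coherence is checked at these junctions, the distance computation is routine and all three statements follow simultaneously, exactly as in the proof of the previous lemma.
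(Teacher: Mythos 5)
Your argument is correct and is essentially the paper's own proof: the paper takes an edge $e$ on $\mathfrak{c}$ and observes that $t_1^{-1}.e$, $e$ and $t_2.e$ are pairwise coherent by transitivity of coherence, then applies Lemma~\ref{lem:detect-axis} to the pair $t_1^{-1}.e$ and $(t_2t_1).(t_1^{-1}.e)=t_2.e$ and reads off the translation length from the distance between them; your choice of $e$ as the $t_1$-preimage of an edge of $\mathfrak{c}$ is the same pair of edges under relabelling. (Your additional requirement that $(t_2t_1).e$ land beyond the far end of $\mathfrak{c}$ is unnecessary and not always arrangeable, e.g.\ when $\mathfrak{c}$ is a ray, but nothing in the argument depends on it.)
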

\begin{proof}
If $e$ is any edge on $\mathfrak{c}$, then $t_1^{-1}.e$, $e$ and $t_2.e$ are all coherent because $t_1$ and $t_2$ translate in the same direction and the coherence relation on edges is transitive. Parts~(1) --(3) then follow by applying 
Lemma~\ref{lem:detect-axis}. 
\end{proof}

\begin{lemma}\label{lem:scale(prod(hyperbolics nontrivial intersection incoherent))}
Let~$t_1$ and~$t_2$ 
be 
hyperbolic automorphisms 
of a tree whose axes
intersect in a nontrivial path 
$\mathfrak{c}$
along which~$t_1$ and~$t_2$ 
translate 
in opposite directions.
 
Then the product $t_2t_1$ may be hyperbolic or elliptic and 
$\ell(t_2t_1) < \ell(t_2)+\ell(t_1)$. 
\end{lemma}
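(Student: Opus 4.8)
The plan is to prove the inequality by exhibiting a single vertex whose displacement under $t_2t_1$ is strictly smaller than $\ell(t_1)+\ell(t_2)$; since $\ell(t_2t_1)=\min_p d(p,t_2t_1.p)$, any such vertex bounds the translation length. To set up, I would label the vertices of the common path as $p_0,\dots,p_c$, oriented so that $t_1$ translates in the direction $p_0\to p_c$ and $t_2$ (translating oppositely, by hypothesis) in the direction $p_c\to p_0$; write $\ell_1=\ell(t_1)$, $\ell_2=\ell(t_2)$ and $c\ge1$ for the number of edges of $\mathfrak{c}$. The geometric input I would rely on is that, because the intersection of the two axes is exactly $\mathfrak{c}$, the axes branch apart at $p_0$ and at $p_c$, so nearest-point projection onto $\min(t_2)$ is controlled.

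The vertex I would test is the endpoint $p_0$. The point $t_1.p_0$ is the point of $\min(t_1)$ at distance $\ell_1$ from $p_0$ in the positive direction, so its nearest-point projection $\pi$ onto $\min(t_2)$ lies on $\mathfrak{c}$ at distance $m:=\min(\ell_1,c)$ from $p_0$, and $r:=d(t_1.p_0,\min(t_2))=\ell_1-m$. Since $t_2$ is an isometry preserving its axis, projection commutes with $t_2$, so $t_2t_1.p_0$ projects onto $\min(t_2)$ at $t_2.\pi$ and sits at distance $r$ from it. As $p_0\in\min(t_2)$, the geodesic from $p_0$ to $t_2t_1.p_0$ passes through $t_2.\pi$, whence $d(p_0,t_2t_1.p_0)=d(p_0,t_2.\pi)+r$. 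Measuring $d(p_0,t_2.\pi)$ inside $\min(t_2)$, where $p_0$ is at distance $m$ from $\pi$ and $t_2$ shifts $\pi$ by $\ell_2$ in the $p_c\to p_0$ direction, gives $d(p_0,t_2.\pi)=|m-\ell_2|$, so $d(p_0,t_2t_1.p_0)=|m-\ell_2|+(\ell_1-m)$. Because $m\ge1$ and $\ell_2\ge1$, the strict triangle inequality $|m-\ell_2|<m+\ell_2$ yields $d(p_0,t_2t_1.p_0)<\ell_1+\ell_2$, which is the assertion $\ell(t_2t_1)<\ell(t_1)+\ell(t_2)$.

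For the statement that $t_2t_1$ may be either hyperbolic or elliptic, I would simply record two configurations with coincident axes translating in opposite directions: taking $t_2=t_1^{-1}$ makes $t_2t_1$ the identity, hence elliptic, while taking $t_2=t_1^{-2}$ makes $t_2t_1=t_1^{-1}$, a nontrivial translation, hence hyperbolic. Both are consistent with the bound just proved. The conceptual reason no single conclusion is forced, in contrast with \lemref{lem:scale(prod(hyperbolics no edge common))} and \lemref{lem:scale(prod(hyperbolics nontrivial intersection coherent))}, is exactly that for an edge $e$ on $\mathfrak{c}$ the edges $t_1^{-1}.e$ and $(t_2t_1).e=t_2.e$ are \emph{incoherent}, so \lemref{lem:detect-axis} cannot be invoked to detect an axis.

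I expect the main obstacle to be the bookkeeping when $\ell_1$ exceeds the length $c$ of $\mathfrak{c}$: then $t_1.p_0$ leaves $\min(t_2)$, and one must track both its projection and its off-axis distance $r$ rather than argue entirely inside $\mathfrak{c}$. Encapsulating this in the single quantity $m=\min(\ell_1,c)$, so that the formula $d(p_0,t_2t_1.p_0)=|m-\ell_2|+(\ell_1-m)$ holds uniformly, is what collapses the three regimes (both orbits remaining on $\mathfrak{c}$, one escaping, both escaping) into one estimate. One should also verify that the computation degenerates correctly when $\mathfrak{c}$ is a ray or the whole axis, as in the coincident-axis examples, where it applies verbatim with $c=\infty$ and hence $m=\ell_1$.
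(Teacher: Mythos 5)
Your argument is correct and in essence the same as the paper's: both bound $\ell(t_2t_1)$ by the displacement of a single well-chosen point and exploit the forced backtrack along $\mathfrak{c}$, except that the paper avoids all projection bookkeeping by taking the point $t_1^{-1}.w$ for an edge $(v,w)$ of $\mathfrak{c}$ and observing that the concatenated path $t_1^{-1}.w \to w \to t_2.w$ reverses across that edge, giving $\ell(t_2t_1)\le d(t_1^{-1}.w,v)+d(v,t_2.w)=\ell(t_1)+\ell(t_2)-2$ at once. Your explicit examples $t_2=t_1^{-1}$ and $t_2=t_1^{-2}$ for the ``may be hyperbolic or elliptic'' clause are a small addition beyond the paper, which asserts that part without proof.
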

\begin{proof}
Let 
$e = (v,w)$ be an edge on $\mathfrak{c}$ and suppose that $t_1$ translates along $(v,w)$ and $t_2$ along $(w,v)$. Then $t_2t_1$ maps the vertex $t_1^{-1}.w$ to $t_2.w$, and the path from $t_1^{-1}.w$ to $w$ and then to $t_2.w$ includes the reversal from $v$ to $w$ and back to $v$. Hence $\ell(t_2t_1)$ is at most $d(t^{-1}_1.w,v) + d(v,t_2.w) = \ell(t_1) + \ell(t_2) - 2$. 
\end{proof}

Lemmas proved up to this point allow the pairs of elements belonging to a semigroup on which translation distances add to be characterized in terms of their minimizing sets. 
\begin{lemma}\label{lem:nec-cond(mult-semiGs)}
If two automorphisms 
of a locally finite tree 
are contained 
in a semigroup on which the translation length is an additive function,  
then 
their minimal sets 
intersect non-trivially.  
\end{lemma}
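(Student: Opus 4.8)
The plan is to prove the contrapositive: assuming the two automorphisms, call them $t_1$ and $t_2$, have disjoint minimal sets, I would exhibit a product lying in the semigroup on which the translation length fails to be additive. Recall that $\min(t_1)$ and $\min(t_2)$ are nonempty convex subtrees (a vertex or larger fixed subtree when elliptic, the axis when hyperbolic), so if they are disjoint there is a unique bridge $\mathfrak{d}$ joining them, meeting $\min(t_1)$ in a single point $a$ and $\min(t_2)$ in a single point $b$ and having some number $d\geq1$ of edges. The target is then the single inequality $\ell(t_2t_1)>\ell(t_1)+\ell(t_2)$; since $t_2t_1$ lies in the semigroup, this contradicts additivity of $\ell$ there.

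The core is the identity $\ell(t_2t_1)=\ell(t_1)+\ell(t_2)+2d$, valid whenever the minimal sets are disjoint. When both $t_1$ and $t_2$ are hyperbolic this is precisely \lemref{lem:scale(prod(hyperbolics no edge common))}, so the real work is to check that the same argument survives when one or both elements are elliptic. Following that proof, I would observe that $t_2t_1$ carries $t_1^{-1}.\mathfrak{d}$ onto $t_2.\mathfrak{d}$, and that the convex hull of $t_1^{-1}.\mathfrak{d}\cup t_2.\mathfrak{d}$ is a single geodesic running from $t_1^{-1}.b$ through $t_1^{-1}.a$, along $\min(t_1)$ to $a$, across $\mathfrak{d}$ to $b$, along $\min(t_2)$ to $t_2.b$, and out along $t_2.\mathfrak{d}$ to $t_2.a$, of total length $\ell(t_1)+\ell(t_2)+3d$ (the contributions $\ell(t_i)$ vanishing for the elliptic factors, for which $t_1^{-1}.a=a$ and $t_2.b=b$). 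Each edge of $t_1^{-1}.\mathfrak{d}$ is then distinct from and coherent with its image in $t_2.\mathfrak{d}$, so \lemref{lem:detect-axis} shows that $t_2t_1$ is hyperbolic with these edges on its axis; computing the displacement of the axis point $t_1^{-1}.b$ yields $\ell(t_2t_1)=d(t_1^{-1}.b,t_2.b)=\ell(t_1)+\ell(t_2)+2d$. As $d\geq1$, this strictly exceeds $\ell(t_1)+\ell(t_2)$, which is the contradiction sought. (Note that the analogues \lemref{lem:scale(prod(hyperbolics nontrivial intersection coherent))} and \lemref{lem:scale(prod(hyperbolics nontrivial intersection incoherent))} are not needed here, as they concern the intersecting configurations.)

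The main obstacle is verifying the coherence claim uniformly in the elliptic cases, where the axis degenerates to $a$ or $b$. The point to confirm is that $t_1^{-1}.\mathfrak{d}$ and $\mathfrak{d}$ already diverge at $a$ — equivalently that $t_1^{-1}$ moves the first edge of $\mathfrak{d}$ issuing from $a$ — which holds because the neighbour of $a$ along $\mathfrak{d}$ lies outside $\min(t_1)$ by the minimality of $a$; this guarantees that the union above is a genuine geodesic and that each edge differs from its image, so that \lemref{lem:detect-axis} applies exactly as in the hyperbolic case. A secondary technical wrinkle is inversions, whose minimal set is the midpoint of an edge rather than a vertex; these are most cleanly dispatched by passing to the barycentric subdivision of the tree, after which every elliptic automorphism fixes a vertex, or by carrying out the same edge count in the geometric realisation. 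With these points settled, the strict inequality $\ell(t_2t_1)>\ell(t_1)+\ell(t_2)$ holds in every case, and the lemma follows.
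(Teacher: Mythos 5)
Your proof is correct, but it takes a genuinely different (and more uniform) route than the paper's for two of the three type-combinations. The paper argues by cases on the types of the two automorphisms: when both are hyperbolic it cites Lemma~\ref{lem:scale(prod(hyperbolics no edge common))}, exactly as you do; when both are elliptic it instead invokes Lemma~\ref{lem:struc(elliptic_semigroups)} (additivity forces the semigroup they generate to consist of elliptic elements, which must then fix a common vertex, invert a common edge, or fix a common end, and in each case the fixed-point sets meet); and in the mixed case it gives a contrapositive argument comparing the axes of $h$ and $rh$. You replace the latter two arguments by a single computation: rerunning the proof of Lemma~\ref{lem:scale(prod(hyperbolics no edge common))} with $\min(t_i)$ in place of the axes to get $\ell(t_2t_1)=\ell(t_1)+\ell(t_2)+2d$ whenever the minimal sets are disjoint, which contradicts additivity since $d\geq 1$. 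The one point that genuinely needs checking beyond the hyperbolic--hyperbolic case --- that the concatenation through $t_1^{-1}.\mathfrak{d}$, $\min(t_1)$, $\mathfrak{d}$, $\min(t_2)$ and $t_2.\mathfrak{d}$ does not backtrack at $a$ or $b$ when the corresponding factor is elliptic --- you identify and settle correctly: the neighbour of $a$ on $\mathfrak{d}$ lies outside $\min(t_1)$ by minimality of $a$, so $t_1^{-1}.\mathfrak{d}$ and $\mathfrak{d}$ share only the vertex $a$, and Lemma~\ref{lem:detect-axis} applies as before. What your version buys is self-containedness (no appeal to Lemma~\ref{lem:struc(elliptic_semigroups)}, hence none to the closure/compactness argument behind it, and no need for the unstated step that two elliptics fixing a common end have intersecting fixed-point sets) together with the explicit defect $2d$ in every case; the cost is redoing the coherence computation rather than quoting it. Your handling of inversions via the barycentric subdivision, or equivalently the geometric realisation that the paper implicitly uses when it speaks of midpoints, is also fine.
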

\begin{proof}
If  
both automorphisms 
are elliptic the claim follows from Lemma~\ref{lem:struc(elliptic_semigroups)}. 
When both both automorphisms 
are  hyperbolic, 
it follows from Lemma~\ref{lem:scale(prod(hyperbolics no edge common))}. 

If one of the automorphisms, 
$h$ say,  
is hyperbolic 
while 
the other, 
$r$ say,  
is elliptic, 
we give  
a contrapositive 
argument 
as follows. 

By additivity, 
both $h$ and~$rh$ 
are hyperbolic  
and of the same translation length. 
Suppose the minimal sets of $r$ and $h$ have trivial interection.
Then~$r$ 
does not fix 
any point 
of the axis 
of~$h$. 
Thus 
the axes 
of~$h$ and~$rh$ 
do not intersect. 
By Lemma~\ref{lem:scale(prod(hyperbolics no edge common))}(4)
we conclude that 
$\ell(rh)>\ell(r)+\ell(h)$, 
hence 
that~$r$ and~$h$ 
do not lie in 
a common additive semigroup
and the claim 
is verified. 
\end{proof}

Certain \scalar\ semigroups 
in $G$ (acting $2$-transitively on $\partial T$) are described next in preparation for showing, in Theorem~\ref{thm:class(scalar-semigroups)}, that every such semigroup is contained in one of these. Additional notation is required for the characterization of the \scalar\ semigroups. For each $v\in V(T)$, put 
$$
E(v) = \left\{ e\in E(T) \colon t(e) = v\right\}
$$ 
be the set of edges incident on and directed to~$v$. For each proper non-empty subset, $I\subset E(v)$, put 
$$
{I}^* = \left\{ \bar{e}\colon e\in E(v)\setminus I\right\},
$$
a set of edges incident on and directed away from~$v$. We shall also write
$$
G_{(v)} = \left\{ g\in G \colon v\in \min(g)\right\},
$$
the set of elements of~$G$ that fix $v$ if they are elliptic or such that their axis passes through~$v$ if they are hyperbolic. If~$g$ is in $G_{(v)}$ and is hyperbolic, and if there are $e_1\in I$ and $e_2\in{I}^*$ on the axis of~$g$ and such that~$g$ translates in the direction of~$e_1$ and~$e_2$, we shall say that \emph{$g$ translates in through~$I$ and out through~${I}^*$}.

\begin{prop}\label{def:compatible-with-orientations}
Let $T$ be a semi-homogeneous tree in which each vertex has valency at least three. Let~$G$ be 
a group of automorphisms of the tree~$T$ that acts $2$-transitively on~$\partial T$. For~$v$ a vertex in~$T$ and $I$  a proper, non-empty subset 
of~$E(v)$ put
\begin{align}
\GIv &:= \left\{g\in G_{(v)} \colon  g.I = I \hbox{ if $g$ is elliptic, and }
\right.\\
& \phantom{AAAA}  \left.g \hbox{ translates in through $I$ and out through ${I}^*$ if $g$ is hyperbolic }\right\}\notag
\end{align}
For an end $\omega$ of the tree 
put
\begin{align}
\Gom
&:=
\left\{g\in G_\omega\colon g\hbox{ is elliptic, or }g\hbox{ is hyperbolic and }\omega = \epsilon_+(g) \right\} \hbox{ and }\\
\omG 
&:=
\left\{g\in G_\omega\colon g\hbox{ is elliptic, or }g\hbox{ is hyperbolic and }\omega = \epsilon_-(g) \right\}. 
\end{align}
Then %$\GIvO$, 
$\GIv$, $\Gom$, and $\omG$ are \scalar\  subsemigroups of $G$. \\Moreover $\GIv\ne \GJv$ whenever $I\ne J$.
\end{prop}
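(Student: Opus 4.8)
The plan is to reduce every assertion to a statement about translation lengths. Recall from the scale formula \eqref{eq:scale_calc} that $s(g)=(q_Oq_E)^{\ell(g)/2k}$ for every $g\in G$, where $\ell$ denotes translation length and $\ell(g)=0$ exactly when $g$ is elliptic; hence $s(gh)=s(g)s(h)$ if and only if $\ell(gh)=\ell(g)+\ell(h)$. (We may assume $G$ is closed in $\mathrm{Aut}(T)$: replacing $G$ by its closure enlarges each of the three families to the corresponding family for $\overline G$, preserves $2$-transitivity on $\partial T$ and the scale, and the desired statements for $G$ follow by intersecting with $G$; this makes \eqref{eq:scale_calc} available.) Thus, for each of $\GIv$, $\Gom$, $\omG$ it suffices to show the family is closed under multiplication and that $\ell$ is additive on it. The engine for the hyperbolic--hyperbolic computations will be Lemmas~\ref{lem:scale(prod(hyperbolics no edge common))}--\ref{lem:scale(prod(hyperbolics nontrivial intersection incoherent))}: a configuration in which the two axes are traversed coherently on their overlap yields additivity (Lemma~\ref{lem:scale(prod(hyperbolics no edge common))} with $d=0$, and Lemma~\ref{lem:scale(prod(hyperbolics nontrivial intersection coherent))}), whereas the incoherent configuration (Lemma~\ref{lem:scale(prod(hyperbolics nontrivial intersection incoherent))}) strictly decreases length. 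The whole point of the orientation conditions is to rule the incoherent case out.

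For $\Gom$ and $\omG$ I would use the horocyclic (Busemann) displacement homomorphism $\beta_\omega\colon G_\omega\to(\ZZ,+)$ attached to the fixed end $\omega$. For $g\in G_\omega$ one has $\ell(g)=|\beta_\omega(g)|$, with $\beta_\omega(g)=0$ precisely when $g$ is elliptic and $\sgn\beta_\omega(g)$ recording whether $\omega=\epsilon_+(g)$ or $\omega=\epsilon_-(g)$ in the hyperbolic case. Hence $\Gom=\beta_\omega^{-1}(\{n\ge0\})$ and $\omG=\beta_\omega^{-1}(\{n\le0\})$, and both are subsemigroups on which $\ell$ is additive simply because $\beta_\omega$ is a homomorphism. (Equivalently, using only the lemmas above: two hyperbolic elements with a common attracting end share a ray towards $\omega$ and so are coherent, while an elliptic element of $G_\omega$ fixes a subray towards $\omega$, so Lemma~\ref{lem:detect-axis} applied to a deep edge of that shared ray does the job.)

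The substantive case is $\GIv$. Writing $J:=E(v)\setminus I$, the set $I$ determines a partition of the branches of $T$ at $v$ into the union $B_I$ of branches entered through $I$ and the union $B_J$ of the remaining branches; a hyperbolic member of $\GIv$ is exactly one whose axis runs through $v$ with $\epsilon_-\in\partial B_I$ and $\epsilon_+\in\partial B_J$, while an elliptic member fixes $v$ and preserves this partition. The key observation is that \emph{every} member of $\GIv$ pushes from $\partial B_I$ towards $\partial B_J$ through $v$; in particular, for two hyperbolic members the common part of their axes (which always contains $v$) is traversed by both in the same $I\to J$ direction. So the product falls under Lemma~\ref{lem:scale(prod(hyperbolics no edge common))} (intersection a single vertex, $d=0$) or Lemma~\ref{lem:scale(prod(hyperbolics nontrivial intersection coherent))} (nontrivial coherent overlap), never under the length-decreasing Lemma~\ref{lem:scale(prod(hyperbolics nontrivial intersection incoherent))}; the axis description in those lemmas shows the product again passes through $v$ entering through $I$ and leaving through $I^*$, giving closure and additivity at once. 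The elliptic--elliptic case is immediate: the product fixes $v$ and preserves $I$.

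The main obstacle is the mixed case, a hyperbolic $h$ times an elliptic $g$ (in either order), where the product lemmas do not apply directly and one must first prove the product is hyperbolic at all. Here I would argue by hand with Lemma~\ref{lem:detect-axis}: let $e\in I$ be the oriented edge by which the axis of $h$ enters $v$, and track a suitable such in-edge under the product. Because $g$ fixes $v$ and only permutes the branches within $B_I$ and within $B_J$, every distance entering the coherence test between $e$ and $(gh).e$ is realised along a path through $v$, so the branch-permutation performed by $g$ cancels and $e$, $(gh).e$ come out coherent and distinct. Lemma~\ref{lem:detect-axis} then makes the product hyperbolic with $v$ on its axis, entering through $I$ and leaving through $I^*$, and of translation length $\ell(h)=\ell(g)+\ell(h)$; note that a mere contraction estimate such as $gh(\overline{B_J})\subseteq B_J$ would \emph{not} suffice to force hyperbolicity, which is why the explicit coherent edge is needed. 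Finally, for $\GIv\ne\GJv$ when $I\ne J$, I would pick $e\in I\setminus J$ and use Proposition~\ref{prop:2-trans_implies_not_uniscalar}\eqref{prop:2-trans_implies_not_uniscalar1} to produce a hyperbolic element entering $v$ through $e$ and leaving through an edge outside $I$; it lies in $\GIv$, but since its entering edge $e$ is not in $J$ it cannot lie in $\GJv$.
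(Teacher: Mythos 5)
Your proposal is correct and, on the substantive case $\GIv$, follows essentially the same route as the paper: reduce scale-multiplicativity to additivity of translation length via the formula $s(g)=(q_Oq_E)^{\ell(g)/2k}$, dispose of hyperbolic--hyperbolic products by Lemma~\ref{lem:scale(prod(hyperbolics no edge common))} with $d=0$ or Lemma~\ref{lem:scale(prod(hyperbolics nontrivial intersection coherent))} (the coherence of the two axes on their overlap holds for exactly the reason you give: an edge of $E(v)$ lying in $I$ and on the axis of a member of $\GIv$ can only be the in-edge), and handle the mixed elliptic--hyperbolic case by exhibiting an explicit coherent pair of edges and invoking Lemma~\ref{lem:detect-axis} --- the paper tracks $e_1$ and $gh.e_1$ (respectively $g^{-1}.e_1$ and $hg.g^{-1}.e_1$) exactly as you describe, and your remark that a contraction estimate alone would not force hyperbolicity is well taken. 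The distinctness argument via Proposition~\ref{prop:2-trans_implies_not_uniscalar}\eqref{prop:2-trans_implies_not_uniscalar1} is also the paper's (just note the trivial point that $I\setminus J$ could be empty, in which case you work with $J\setminus I$ instead). The one genuine divergence is your treatment of $\Gom$ and $\omG$ by the Busemann homomorphism $\beta_\omega\colon G_\omega\to(\ZZ,+)$ with $\ell=|\beta_\omega|$: this is slicker, handles both semigroups symmetrically in one stroke, and makes closure under multiplication and additivity of $\ell$ simultaneous consequences of $\beta_\omega$ being a homomorphism with values of constant sign; the paper instead argues directly with shared rays towards $\omega$ and Lemma~\ref{lem:scale(prod(hyperbolics nontrivial intersection coherent))}, then obtains $\omG$ from $\Gom$ via the inversion involution of Proposition~\ref{prop:involution}. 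Your parenthetical reduction to $G$ closed is a reasonable tidying-up of a point the paper leaves implicit.
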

\begin{proof}
As was seen in Proposition~\ref{prop:2-trans_implies_not_uniscalar}, elements $g,h\in G$ satisfy $s(gh) = s(g)s(h)$ if and only if $\ell(gh) = \ell(g)+\ell(h)$, and the proof below refers to lemmas about additivity of translation length for statements about multiplicativity of the scale.
 
We begin by showing that $\GIv$ is \scalar\ and closed under multiplication.
Consider $g,h\in\GIv$. 
If both $g$ and $h$ are elliptic and stabilise $I$, 
then the product will again be elliptic and stabilise~$I$, and hence back in $\GIv$. 
If both $g$ and $h$ are hyperbolic, 
translating in through $I$ and out through ${I}^*$, 
then we are either in the case of 
Lemma~\ref{lem:scale(prod(hyperbolics no edge common))} with $d=0$ 
or Lemma ~\ref{lem:scale(prod(hyperbolics nontrivial intersection coherent))}. 
Either way,  $s(gh)=s(g)s(h)$ as required.
Hence it remains only to prove that a product of 
an elliptic element and a hyperbolic element 
in $\GIv$ is again back in  $\GIv$.
Suppose $g$ is elliptic stabilizing $I$ 
and $h$ is hyperbolic translating in through $e_1\in I$ and out through $e_2\in {I}^*$.
Consider first $hg$.  Then $g^{-1}. e_1\in I$ and  $hg. (g^{-1}. e_1)=h. e_1$ is coherent with and different from $g^{-1}. e_1$ because $h$ is hyperbolic. 
Hence, by Lemma~\ref{lem:detect-axis}, $hg$ is hyperbolic and the edges $g^{-1}. e_1$ and $h. e_1$ lie on the axis of $hg$. 
The path between these two edges contains $v$ and $e_2\in {I}^*$. Hence $hg\in \GIv$. Moreover, $\ell(hg)=\ell(h)$ and hence $s(hg)=s(h)=s(h)s(g)$.
Finally, consider $gh$. A similar argument shows that $e_1$ and $gh.e_1$ are coherent and different, proving that $gh$ is hyperbolic by Lemma~\ref{lem:detect-axis}. Since  $g.e_2\in {I}^*$ is on the axis of $gh$, we conclude that $gh\in \GIv$. Moreover, $\ell(gh)=\ell(h)$ and hence $s(gh)=s(h)=s(g)s(h)$. Hence $\GIv$ is an \scalar\ subsemigroup of $G$.

For each non-empty and proper subset,~$I$, of~$E(v)$ and~$e_1\in I$ and $e_2\in I^*$, there are ends~$\omega_-$ and~$\omega_+$ of~$T$ such that~$e_1$ lies on the ray~$]\omega_1,v]$ and~$e_2$ lies on the ray $[v,\omega_2[$. By Proposition~\ref{prop:2-trans_implies_not_uniscalar}\eqref{prop:2-trans_implies_not_uniscalar1}, there is a hyperbolic element~$h\in G$ with axis $]\omega_1,\omega_2[$. It may be supposed that~$h$ translates in through~$e_1$ and out through~$e_2$, so that~$h$ belongs to~$\GIv$. Hence, for every $(e_1,e_2)\in I\times I^*$ there is~$h\in\GIv$ that translates along~$e_1$ and~$e_2$. Conversely, if~$h$ is hyperbolic with~$v$ on its axis, and~$h$ translates in through~$e_1$ and out through~$e_2$, then~$h$ does not belong to~$\GIv$ unless $(e_1,e_2)\in I\times I^*$. Therefore, if $I\ne J$ are two non-empty, proper subsets of~$E(v)$, then $\GIv\ne \GJv$.

Consider now $\Gom$. 
Given two hyperbolic elements in $\Gom$, 
their axes intersect in a ray in the equivalence class $\omega$. 
Hence they satisfy the hypotheses of 
Lemma~\ref{lem:scale(prod(hyperbolics nontrivial intersection coherent))} 
and their product is again a hyperbolic element in $\Gom$ 
and the scale of the product is the product of their scales. 
All elements in $\Gom$ fix $\omega$. 
Hence, given an elliptic element $g$ and 
a hyperbolic element $h$ in $\Gom$, 
there is some ray $\frak{r}$ in $\omega$ 
that is fixed by $g$ and translated towards $\omega$ by $h$. 
Hence $\frak{r}$ is translated by both $gh$ and $hg$, 
and $\ell(gh)=\ell(h)=\ell(hg)$ as required.

Since $\omG$=$(\Gom)^{-1}$, 
the result for $\omG$ follows from 
the result for $\Gom$ and  
Proposition~\ref{prop:involution}.
\end{proof}

We finally come to the description of the maximal \scalar\ semigroups in~$G$.

\begin{theorem}\label{thm:class(scalar-semigroups)}
Let $T$ be a semi-homogeneous tree in which each vertex has valency at least three. Suppose that~$G$ is a closed subgroup of the automorphism group of $T$ that acts $2$-transitively on $\partial T$. 
Then every \scalar\ semigroup in~$G$ is contained in one of  
the following types, which are hence maximal. 
\begin{enumerate}
\item  
The fixator of 
the midpoint 
of an edge 
that is 
inverted by~$G$.
\item 
The fixator of a vertex.
\item 
A set
of the form~$\GIv$ 
as defined in 
Proposition~\ref{def:compatible-with-orientations}, with $I\subset E(v)$ proper and not empty.  
\item
A set
of the form~$\Gom$ or~$\omG$ 
for an end~$\omega$ 
as defined in 
Proposition~\ref{def:compatible-with-orientations}.
\end{enumerate}
\end{theorem}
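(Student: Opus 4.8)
The plan is to pass from the scale to the translation length and then classify the resulting geometric configurations. By Proposition~\ref{prop:2-trans_implies_not_uniscalar} and the discussion following it, the valencies satisfy $q_E,q_O>1$, so for $g,h\in G$ one has $s(gh)=s(g)s(h)$ exactly when $\ell(gh)=\ell(g)+\ell(h)$; hence a subsemigroup $\mathcal S\subseteq G$ is \scalar\ if and only if $\ell$ is additive on it. If $\mathcal S$ consists entirely of elliptic elements, Lemma~\ref{lem:struc(elliptic_semigroups)} gives at once one of the first, second, or fourth families (fixing a vertex, inverting an edge and fixing its midpoint, or fixing an end $\omega$, in which last case $\mathcal S\subseteq\Gom$). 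So assume $\mathcal S$ contains a hyperbolic element. Since $\ell$ is additive and vanishes on elliptic elements, the product of two elliptic elements of $\mathcal S$ is again elliptic; thus the elliptic part $\mathcal S_{\mathrm{ell}}$ is a semigroup to which Lemma~\ref{lem:struc(elliptic_semigroups)} applies, and by Lemma~\ref{lem:nec-cond(mult-semiGs)} the fixed-point set of every $r\in\mathcal S_{\mathrm{ell}}$ meets the axis of every hyperbolic element of $\mathcal S$.

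I then organise the hyperbolic part $H$. The three lemmas computing the translation length of a product of hyperbolic automorphisms, together with Lemma~\ref{lem:detect-axis} and additivity, force any two axes to meet and to be coherent wherever they share an edge; orienting each axis towards its attracting end therefore gives a consistent orientation of the subtree $U=\bigcup_{h\in H}A_h$. Setting $C=\bigcap_{h\in H}A_h$, the Helly property for the pairwise-meeting convex sets $A_h$ makes every finite intersection non-empty, and a compactness argument in the tree shows that either $C$ is non-empty and bounded (a vertex or a segment), or else the axes share a common end. This is the fundamental dichotomy.

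In the common-end case, write $\omega$ for the shared end; coherence on the shared rays forces $\omega=\epsilon_+(h)$ for every $h\in H$ (or $\omega=\epsilon_-(h)$ for every $h$, the two possibilities being interchanged by the involution of Proposition~\ref{prop:involution}). For elliptic $r\in\mathcal S$ and any $h\in H$ the product $hr$ lies in $H$, so $\epsilon_+(hr)=\omega$ and hence $(hr).\omega=\omega$; since $h.\omega=\omega$ this gives $r.\omega=h^{-1}.\omega=\omega$. Thus every element of $\mathcal S$ fixes $\omega$ and has $\omega$ for its attracting end when hyperbolic, so $\mathcal S\subseteq\Gom$. In the bounded case I fix a vertex $v\in C$, through which every axis passes. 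The decisive point is that $\mathcal S_{\mathrm{ell}}$ fixes $C$ pointwise: if some $r$ moved a vertex $c\in C$ while fixing the adjacent vertex on the side from which the elements of $H$ translate into $c$, then for $h\in H$ through $c$ one computes $d(c,rh.c)=\ell(h)+2$, whereas $c\in C\subseteq\min(rh)$ forces $d(c,rh.c)=\ell(h)$; ruling out the remaining positions of the fixed set of $r$ (they would make all axes share a direction at $v$, contradicting boundedness of $C$) shows $r.v=v$. Coherence at $v$ then splits $E(v)$ into the incoming edges $I$ used by $H$ and their complement, this $I$ being proper and non-empty and satisfying $g\in\GIv$ for every hyperbolic $g\in H$; and identifying the incoming edge of $hr$ as $r^{-1}(e_1)$, where $e_1$ is that of $h$, yields $r^{-1}(I)\subseteq I$, whence $r.I=I$ by finiteness of $E(v)$. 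Therefore $\mathcal S\subseteq\GIv$.

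Finally, maximality is immediate: each of the four families is itself \scalar, the first two being compact subgroups on which the scale is identically~$1$ and the last two being \scalar\ by Proposition~\ref{def:compatible-with-orientations}; since every \scalar\ semigroup lies in one of them and distinct families are pairwise incomparable under inclusion, none can be enlarged within the \scalar\ semigroups, so all four are maximal. I expect the main obstacle to be the bounded case, namely proving that $\mathcal S_{\mathrm{ell}}$ fixes $v$ and stabilises $I$: there the displacement computation must be combined with a careful analysis of where the fixed-point set of $r$ can sit relative to $C$, and the degenerate configurations in which all axes share a direction must be shown to collapse into the common-end case.
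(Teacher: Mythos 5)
Your overall strategy --- replacing scale-multiplicativity by additivity of $\ell$ via Proposition~\ref{prop:2-trans_implies_not_uniscalar}, intersecting minimal sets, invoking the Helly property for subtrees, and splitting into a ``common point'' case and a ``common end'' case --- is the paper's strategy, and your treatment of the all-elliptic case and of the common-end case is sound. The gap is exactly where you predicted it: the bounded case. Because you intersect only the axes of the hyperbolic elements, setting $C=\bigcap_{h\in H}A_h$, you must prove separately that every elliptic $r\in\mathcal S$ fixes a vertex $v\in C$ and stabilises $I$. Your displacement computation $d(c,rh.c)=\ell(h)+2$ does dispose of one configuration, but the claim that the remaining configurations ``make all axes share a direction at $v$, contradicting boundedness of $C$'' does not hold: Lemma~\ref{lem:nec-cond(mult-semiGs)} only forces $\min(r)$ to meet each axis, so if $p$ denotes the projection of $v$ onto the subtree $\min(r)$ then every axis, being a subtree containing $v$ and a point of $\min(r)$, contains the whole segment $[v,p]$. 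All axes then do share a direction at $v$, yet $C\supseteq[v,p]$ remains bounded and nonempty, so no contradiction arises and $r.v=v$ does not follow. As written, the bounded case does not close.

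The repair is the paper's one small but decisive change of bookkeeping: intersect $\min(g)$ over \emph{all} $g\in\mathcal S$, elliptic and hyperbolic alike. Lemma~\ref{lem:nec-cond(mult-semiGs)} gives pairwise intersection for every pair of elements of $\mathcal S$ (elliptic--elliptic via Lemma~\ref{lem:struc(elliptic_semigroups)}, elliptic--hyperbolic and hyperbolic--hyperbolic by the translation-length lemmas), so the Helly property \cite[Lemma~10 in section~6.5]{trees} makes every finite intersection $\min(\mathcal F)$ nonempty. The directed-family dichotomy then yields either a common end (case~(4)) or a nonempty $\min(\mathcal S)$; in the latter case any vertex $v\in\min(\mathcal S)$ is by construction fixed by every elliptic element and lies on every axis, so $r.v=v$ is free and no displacement computation or analysis of the position of $\min(r)$ relative to $C$ is needed. (In the configuration above, the paper's argument simply selects a vertex of $[v,p]\cap\min(\mathcal S)$ in place of your $v$.) With that substitution your outline goes through; note only that your closing assertion that ``distinct families are pairwise incomparable under inclusion'' is itself the content of the maximality step and still requires the short case-by-case verification the paper supplies (e.g.\ that $G_v\not\subseteq\GIw$ and that $\GIv\not\subseteq\Gom$).
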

\begin{proof}
Let~$\mathcal{S}$ 
be
a \scalar\ semigroup 
in $G$.  
We will show
that~$\mathcal{S}$ 
is contained in 
one of the \scalar\ semigroups 
listed. For each subset,~$\mathcal{F}$, of $\mathcal{S}$ put $\min(\mathcal{F}) = \bigcap_{g\in\mathcal{F}} \min(g)$. Then, since $\min(g)$ is a subtree and the intersection of subtrees is a subtree,  Lemma~\ref{lem:nec-cond(mult-semiGs)} and~\cite[Lemma~10 in section~6.5]{trees} show that $\min(\mathcal{F}) \ne \emptyset$ for every finite $\mathcal{F}$. 

Suppose first that $\min(\mathcal{S}) = \emptyset$. Then for each edge $e$, there is a finite subset $\mathcal{F}\subset \mathcal{S}$ such that $\min(\mathcal{F})$ is contained in one of the semitrees obtained by deleting~$\{e,\bar{e}\}$. Otherwise, $e$ would be in $\min(\mathcal{F})$ for every $\mathcal{F}$ and therefore in $\min(\mathcal{S})$. Hence there is a unique end, $\omega$, of the tree such that every subtree  $\min(\mathcal{F})$ contains a ray belonging to $\omega$. It follows that each element of $\mathcal{S}$ is either elliptic and fixes a ray in $\omega$ or is hyperbolic and translates a ray in $\omega$. By Lemma~\ref{lem:scale(prod(hyperbolics nontrivial intersection incoherent))}, no two hyperbolic elements can translate in opposite directions and so we are in case~(4). Note that there do exist hyperbolic elements in~$G_\omega$, by Proposition~\ref{prop:2-trans_implies_not_uniscalar}\eqref{prop:2-trans_implies_not_uniscalar1}, and so $\Gom$ and $\omG$ are distinct semigroups, and the $2$-transitivity of~$G$ on~$\partial T$ ensures, by Proposition~\ref{prop:2-trans_implies_not_uniscalar}
\eqref{prop:2-trans_implies_not_uniscalar1}, that $\min(\mathcal{S})$ is indeed empty. 

Suppose next that $\mathcal{S}$ contains an edge inversion $g$. Then $\min(g)$ is a singleton, namely, the midpoint, $p$, of the edge. Since, by Lemma~\ref{lem:nec-cond(mult-semiGs)}, $\min(h)$ intersects $\{p\}$ for every $h\in \mathcal{S}$, it follows that $\{p\} = \min(\mathcal{S})$ and we are in case~(1). 

It remains to treat the case when $\min(\mathcal{S})$ is not empty and, since $\mathcal{S}$ does not invert any edge, contains a vertex~$v$. If $\mathcal{S}$ contains only elliptic elements, then~$v$ is fixed by all elements of $\mathcal{S}$ and we are in case~(2). Note that $\min(\mathcal{S})$ cannot be larger than~$\{v\}$ in this case by Proposition~\ref{prop:2-trans_implies_not_uniscalar}
\eqref{prop:2-trans_implies_not_uniscalar1a}. If $\mathcal{S}$ contains hyperbolic elements, then the axis of each one passes through $v$. Put 
$$
I = \left\{ (w,v) \colon \hbox{ there is }h\in \mathcal{S} \hbox{ that translates along }(w,v)\right\}.
$$
Then $I\ne \emptyset$ and, since any hyperbolic $h\in \mathcal{S}$ translates out along some edge~$f$ and~$\bar{f}$ then cannot belong to $I$ by Lemma~\ref{lem:scale(prod(hyperbolics nontrivial intersection incoherent))}, we also have that $I$ is a proper subset of $E(v)$. Hence we are in case~(3). Note that, in this case, $2$-transitivity of the action of $G$ on $\partial T$ ensures, by Proposition~\ref{prop:2-trans_implies_not_uniscalar}
\eqref{prop:2-trans_implies_not_uniscalar1}, that for every $e_1\in I$ and $e_2\in {I}^*$ there is a hyperbolic $h\in \mathcal{S}$ that translates along $e_1$ and $e_2$. Hence $\min(\mathcal{S}) = \{v\}$ unless $I$ or ${I}^*$ consists of a single edge $e$, in which case $\min(\mathcal{S}) = \{e,\bar{e}\}$.

To show 
that 
each of the sets listed 
is indeed 
a maximal \scalar\ semigroup it suffices to show that none is contained in any of the others. \\
\underline{Case~(1):}  The semigroup~$G_p$ fixing the midpoint,~$p$, of an edge cannot be contained in any semigroup~$G_v$, $\GIv$,~$\Gom$ or~$\omG$ because~$G_p$ contains an inversion and the other semigroups do not. Hence~$G_p$ is maximal. (This case only occurs if~$G$ contains an inversion.)\\
\underline{Case~(2):} The semigroup~$G_v$ cannot be contained in~$G_p$,~$\Gom$ or~$\omG$  because, if it were,~$G_v$ would fix a midpoint of an edge or an end in addition to~$v$, which it does not. Similarly, if~$G_v$ were contained in $\GIw$ for some~$w$ and~$I$, then ~$w$ would have to equal~$v$ but~$G_v$ is transitive on~$E(v)$ while~$\GIv$ is not. Hence~$G_v$ is not contained in any other semigroup and is maximal. \\
\underline{Case~(3):} The semigroup~$\GIv$ contains hyperbolic automorphisms of~$T$ and so is not contained in $G_p$ or $G_v$ for any midpoint~$p$ or vertex~$v$. It is not contained in~$\Gom$ or $\omG$ for any~$\omega\in\partial T$ because it does not fix~$\omega$. Hence~$\GIv$ is maximal. \\
\underline{Case~(4):} The semigroups~$\Gom$ and~$\omG$ cannot be contained in any of the others because $\min(\Gom)$ and $\min(\omG)$ are empty while the minimizing sets of the other semigroups are not. Hence~$\Gom$ and~$\omG$ are maximal.
\end{proof}

%If ${\mathfrak p} := (v_0,v_1,\dots, v_n)$ is a finite path in $T_q$, then the semigroup consisting of all automorphisms that either fix $v_0,v_1,\dots, v_n$ or whose axes contain ${\mathfrak p}$ and translate in the $v_0\to v_n$ direction is a \scalar\ semigroup but is not maximal because it is contained in $S_{e_i}$ for every $i  = 1,\dots, n$, where $e_i = (v_{i-1},v_i)$. 
%
%
%The above remark shows that there are two maximal \scalar\ semigroups in $G$ corresponding to each edge and to each end of $T_q$. Are these all? The combinatorics of these semigroups reflects the structure of the tree. 

\medskip 

\begin{remark}
~\\[-3ex]
\begin{enumerate}
\item
The maximal \scalar\ semigroups 
of type~(1)--(3) 
in Theorem~\ref{thm:class(scalar-semigroups)} 
are open
because 
they contain 
an open subgroup of $G$,  
while 
the maximal \scalar\ semigroups 
of type~(4) 
are not. 
\item
The semigroups 
of type~(3), 
despite being open, do not contain 
subgroups tidy
for their hyperbolic elements unless either $|I|=1$ or $|I^*|=1$. 
\item
The inverse map on maximal \scalar\ semigroups, see~Proposition~\ref{prop:involution}, satisfies $\Gom^{-1} = \omG$ for every~$\omega\in\partial T$ and $\GIv^{-1} = \GJv$, where~$J = \bar{I}^*$, for every $v\in V(T)$ and $I\subset E(v)$.
\end{enumerate}
\end{remark}

\begin{remark}\label{lem:axes-branching}\label{cond:axes-branching} 
The condition that~$G$ act $2$-transitively on $\partial T$ is required at two places in the characterisation of the maximal \scalar\ semigroups in~$G$. The first is to derive the formula~\eqref{eq:scale_calc} for the scale found in Proposition~\ref{prop:2-trans_implies_not_uniscalar}, and the second is to guarantee that distinct sets $I,J\subset E(v)$ determine distinct semigroups~$\GIv$ and~$\GJv$. Formula~\eqref{eq:scale_calc} relates the scale of~$h$ to translation distance but a formally weaker and more complicated condition suffices for this. It is enough to know, for each vertex~$v$, that for every hyperbolic~$h$ with $v\in \min(h)$, the valency of~$v$ in the subtree~$G_{\epsilon_+(h)}$ does not depend on~$h$. This holds, for example, if $G_{\epsilon_+(h)}$   
is the whole tree for every hyperbolic~$h$. However, it is possible that these weaker conditions, together with the hypothesis that~$G$ does not fix an end, already imply that~$G$ acts $2$-transitively on $\partial T$.
\end{remark}

\section{Conclusion}
\label{sec:conclusion}
This last section points out relationships between the maximal \scalar\ semigroups listed in Theorem~\ref{thm:class(scalar-semigroups)} and how they might be relevant for the possible construction of a `structure space' for a general totally disconnected, locally compact group~$G$ that encodes important information about~$G$. 
 
When the maximal \scalar\ semigroup is~$G_p$, the fixator of an edge, or~$G_v$, the fixator of a vertex, it is in fact a maximal compact, open subgroup of~$G$. These subgroups are thus identified with the defining features of the tree, which can be recovered from them because the incidence relations can be detected from the indices $[G_p : G_p\cap G_v]$ and $[G_v : G_p\cap G_v]$. 

The other maximal \scalar\ semigroups in~$G$ are not subgroups and  not compact but are identified with features of the tree, namely, vertices and ends. Although the index of an intersection does not make sense for these semigroups, relative directions or orientations of features in the tree can be detected from whether the intersection contains hyperbolic elements. For example, $\Gom\cap \Gomp$ does not contain any hyperbolics if $\omega\ne \sigma$ while, on the other hand, $\Gom\cap \ompG$ always contains hyperbolics. Similarly, whether $\GIv\cap \Gom$ contains hyperbolic elements depends on whether the ray $[v,\omega)$ passes through an edge in~$I^*$. Since 
$$
\{ U_{(v,I)} \colon v\in V(T),\ \emptyset \ne I\subsetneq E(v) \}\subset \mathcal{P}(\partial T),
$$
where
$$
U_{(v,I)}  = \{ \omega\in \partial T \colon \GIv\cap \Gom\hbox{ contains a hyperbolic element}\}, 
$$
is a base of neighborhoods for the usual topology on $\partial T$, these semigroups and the relations between them detect more than the sets of vertices or ends of~$T$. 

As stated in the introduction, our purpose in investigating \scalar\ semigroups is to see whether they may be used to construct, for each general totally disconnected, locally compact group, a natural relational or geometric object on which it acts. Indeed, Theorem~\ref{thm:class(scalar-semigroups)} shows that both the vertices and edges of~$T$ as well as its ends can be detected from maximal \scalar\ semigroups, thus suggesting in general that more than one object that might be constructed. Furthermore, as discussed in the previous couple of paragraphs, relations between the semigroups correspond to incidence relations between vertices and edges, or a topology on the set of ends, and it is hoped that a similar correspondence will continue to hold more generally.

Theorem~\ref{thm:class(scalar-semigroups)} also indicates some limitation on the information about~$G$ that can be expected to be recovered from the maximal \scalar\ semigroups in~$G$. Its hypotheses are satisfied by the group $\operatorname{PSL}(2,\mathbb{Q}_p)$ acting on its Bruhat-Tits tree, see~\cite[chapter~II]{trees}, in which every vertex has valency~$p+1$, and also by~$\hbox{Aut}^+(T_{p+1})$, the group of automorphisms of the homogeneous $(p+1)$-valent tree,~$T_{p+1}$, that leave the subsets in the standard bipartition of~$V(T)$ invariant. These groups are quite different. For example, $\operatorname{PSL}(2,\mathbb{Q}_p)$ acts strictly $3$-transitively on $\partial T_{p+1}$, whereas $\hbox{Aut}^+(T_{p+1})$ acts $3$-transitively but not strictly so. More abstractly, the contraction subgroups of $\operatorname{PSL}(2,\mathbb{Q}_p)$ are closed while in $\hbox{Aut}^+(T_{p+1})$ they are not, see~\cite{contraction(aut(tdlcG))}. Yet the map $$
\mathcal{S} \mapsto \mathcal{S}\cap \operatorname{PSL}(2,\mathbb{Q}_p),\ \  (\mathcal{S}\hbox{ maximal \scalar\ semigroup in } \hbox{Aut}^+(T_{p+1}))
$$ 
is a bijection to the maximal \scalar\ semigroups in $\operatorname{PSL}(2,\mathbb{Q}_p)$. Therefore maximal \scalar\ semigroups and the relations between them do not distinguish between $\operatorname{PSL}(2,\mathbb{Q}_p)$ and $\hbox{Aut}^+(T_{p+1})$.

The groups $\operatorname{PSL}(2,\mathbb{Q}_p)$ and $\hbox{Aut}^+(T_{p+1})$ are distinguished however by their local structure as studied in~\cite{CapReWi}, where a structure space on which a totally disconnected, locally compact group acts is produced from subgroups normal in a compact, open subgroup of the group. Compact, open subgroups of $\operatorname{PSL}(2,\mathbb{Q}_p)$ are just-infinite, that is, they have no proper, non-trivial normal subgroups, while compact, open subgroups of $\hbox{Aut}^+(T_{p+1})$ have an infinite lattice of closed, normal subgroups.  Information about a group that might be captured by a description of its maximal \scalar\ semigroups in contrast appears to be large scale and complementary to this local structure. 

The example provided by the pair of groups $\operatorname{PSL}(2,\mathbb{Q}_p)$ and $\hbox{Aut}^+(T_{p+1})$ illustrates another feature that would be desirable in any `structure space' that might be constructed from the maximal \scalar\ subgroups of a group $G$. Just as the Bruhat-Tits tree, that is,~$T_{p+1}$, is constructed directly from $\operatorname{PSL}(2,\mathbb{Q}_p)$ and has automorphism group that is a totally disconnected, locally compact group, a structure space constructed from maximal \scalar\ semigroups in~$G$ could be expected to itself have an automorphism group, call it~$\widetilde{G}$, that is totally disconnected and locally compact. Just as $\operatorname{PSL}(2,\mathbb{Q}_p)$ embeds as a closed subgroup of $\hbox{Aut}(T_{p+1})$, there would then be a natural continuous homomorphism $G\to \widetilde{G}$. This homomorphism would not in general be injective because, for example, when $G$ is discrete there is only one maximal \scalar\ semigroup, namely,~$G$ itself. However, under certain hypotheses such as, for example, that $G$ be non-discrete and simple, the homomorphism could be expected to be injective.

\end{document}